\documentclass{amsart}
\usepackage{amssymb}

\usepackage{xcolor, overpic}
\usepackage[colorlinks=true, pdfstartview=FitV, linkcolor=blue, citecolor=blue, urlcolor=blue]{hyperref}

\theoremstyle{plain}
\newtheorem{thm}{Theorem}[section]
\newtheorem{lemma}[thm]{Lemma}

\newtheorem{cor}[thm]{Corollary}

\newtheorem*{introtheorem}{Main Theorem}

\numberwithin{equation}{section}
\numberwithin{figure}{section}

\theoremstyle{definition}

\newtheorem{example}[thm]{Example}
\newtheorem{remark}[thm]{Remark}

\newtheorem{claim}[thm]{Claim}

%

\newcommand{\G}{\Gamma}

\begin{document}

\title[Non-quasiconvex subgroups]{Non-quasiconvex subgroups of hyperbolic groups via Stallings-like techniques}

\author{Pallavi Dani}
\address{Department of Mathematics, Louisiana State University, Baton Rouge, LA 70803-4918}
\curraddr{}
\email{pdani@math.lsu.edu}
\thanks{The first author was supported in part by NSF Grant \#DMS-1812061.}

\author{Ivan Levcovitz}
\address{Department of Mathematics, Tufts University, Somerville, MA 02144}
\email{Ivan.Levcovitz@tufts.edu}
\thanks{The second author was supported  in part by a Technion fellowship.}

\subjclass[2021]{Primary 20F65, 57M07} 

\date{}

\dedicatory{}

\keywords{}

\maketitle

\begin{abstract}
We provide a new method of constructing non-quasiconvex subgroups of hyperbolic groups by utilizing techniques inspired by Stallings' foldings. The hyperbolic groups constructed are in the natural class of right-angled Coxeter groups (RACGs for short) and can be chosen to be $2$-dimensional. More specifically, given a non-quasiconvex subgroup of a (possibly non-hyper\-bolic) RACG, our construction gives a corresponding non-quasiconvex subgroup of a hyperbolic RACG. 
 We use this to construct explicit examples of non-quasiconvex subgroups of hyperbolic RACGs including subgroups whose generators are as short as possible (length two words), finitely generated free subgroups, non-finitely presentable subgroups, and subgroups of fundamental groups of square complexes of nonpositive sectional curvature.
\end{abstract}

\bigskip

\section{Introduction} \label{sec:intro}
Given a group $G$ with generating set $T$, a subgroup $H$ of $G$ is \textit{quasiconvex} (with respect to~$T$) if some finite neighborhood of $H$ in the Cayley graph $C$ of $(G,T)$ contains every geodesic 
that connects a 
pair of vertices in $H \subset C$.  
If $G$ is hyperbolic, then 
whether or not $H$ is quasiconvex
does not depend on the finite generating set chosen.
In this article, we provide a simple construction of finitely generated non-quasiconvex subgroups of hyperbolic groups using techniques inspired by Stallings' foldings. The hyperbolic groups we construct are additionally in the natural class of right-angled Coxeter groups and can be chosen to admit a  geometric action on a CAT(0) square complex.  This adds to the limited list of known techniques for constructing non-quasiconvex subgroups of hyperbolic groups. 

Given a simplicial graph $\Gamma$ with vertex set $V$ and edge set $E$, the corresponding \textit{right-angled Coxeter group} (RACG for short) $W_\Gamma$ is the group with presentation:
\[\langle s \in V ~|~ s^2 = 1 \text{ for all } s \in V, st = ts \text{ for all } (s,t) \in E \rangle\]
We refer to the generators in this presentation as the \textit{standard generating set} for $W_\Gamma$, and quasiconvexity of subgroups of $W_\Gamma$ will always be assumed to be with respect to this generating set.
We say that $W_\Gamma$ is \textit{$2$-dimensional} if $\Gamma$ does not contain a $3$-cycle.
 Such RACGs act geometrically on a CAT(0) cube complex that is at most $2$-dimensional (their Davis complex). 

Let $W_\Gamma$ be a RACG and $H < W_\Gamma$ a subgroup generated by a finite set of words $T$ in $W_\Gamma$. Let $R$ be the ``rose graph'' associated to $T$, i.e., a bouquet of $|T|$ circles 
in which each circle is subdivided and labeled by an element of $T$. 
In~\cite{DL}, we define a completion $\Omega$ of $H$ as the direct limit of a sequence of fold, cube attachment and cube identification operations performed on $R$, and 
we
 show that many properties of $H < W_\Gamma$ are reflected by those of $\Omega$. Notably, $H$ is quasiconvex in $W_\Gamma$ if and only if $\Omega$ is finite.

Given any simplicial graph $\Gamma$, we define in this article the notion of $\Gamma$-partite graphs
	(see Section \ref{sec:partite_graphs}).
These graphs have a large-scale structure which reflects the graphical structure of $\Gamma$. We show, via an explicit construction, that given any such  $\Gamma$, 
 there are infinitely many $\Gamma$-partite graphs $\Delta$, such that $W_\Delta$ is hyperbolic and $2$-dimensional
(see Theorem~\ref{thm:square_free}).

The importance of $\Gamma$-partite graphs in our setting is explained by the following construction.
Given a $\Gamma$-partite graph $\Delta$, and a subgroup $H< W_\Gamma$  generated by a finite set of words $T$ in~$W_\Gamma$, we define a corresponding  finite set of words $\overline T$ in~$W_\Delta$ which generates a subgroup $\overline H < W_\Delta$.  Moreover, fold and cube 
 attachment operations that are performed on the rose graph associated to $T$ have direct analogues 
 to operations performed on the rose graph associated to $\overline T$. In fact, we show there is a finite-to-one
map from the $1$-skeleton of a completion of $\overline H$ to the $1$-skeleton of a completion of $H$ that is defined by simply collapsing certain bigons to edges.
 In particular, we deduce that $H$ is quasiconvex in $W_\Gamma$ if and only if $\overline H$ is quasiconvex in $W_\Delta$. 
Using this construction, we show:
 
\begin{introtheorem}[Theorem \ref{thm:functor} and Corollary \ref{cor:nonquasiconvex}]\label{thm:intro_thm_main}
	Given any finitely generated,
	non-quasiconvex subgroup $H$ of a (possibly non-hyperbolic) RACG $W_\Gamma$, there exist infinitely many $\Gamma$-partite graphs $\Delta$, such that $W_\Delta$ is hyperbolic, $2$-dimensional and contains the 
	finitely generated, non-quasiconvex subgroup $\overline H$. Moreover, if $H$ is torsion-free (resp.~not normal in $W_\Gamma$) then $\overline H$ is torsion-free (resp.~not normal in $W_\Delta$). 
\end{introtheorem}

Non-quasiconvex subgroups of \textit{non-hyperbolic}
 RACGs are plentiful.  They can, for instance, be found in RACGs that split as a product of infinite groups. Consequently, our construction allows for many easy, explicit examples of non-quasiconvex subgroups of hyperbolic RACGs. 

There are other, but not many, known methods of constructing non-quasiconvex subgroups of hyperbolic groups,
 and we give a non-extensive summary of such constructions. 
Many of the known examples come from 
kernels of homomorphisms:
infinite-index, infinite, normal subgroups of hyperbolic groups  are known to always be non-quasiconvex \cite{ABCFLMSS}.
A classical such example is that of the fundamental group of a hyperbolic $3$-manifold fibering over the circle, which contains a normal, non-quasiconvex surface subgroup (see \cite{Thurston} or \cite{otal}). 
 A celebrated early construction of Rips \cite{Rips}
gives 
 non-finitely presentable, non-quasiconvex,
normal subgroups using small cancellation theory.   
 Another source of such kernels is 
 Bestvina-Brady Morse theory developed in \cite{Bestvina-Brady}.
 This  is utilized, for instance, in \cite{Brady} to give examples of non-hyperbolic (therefore non-quasiconvex) finitely presented subgroups of hyperbolic groups. Bestvina-Brady Morse theory is also applied to RACGs in \cite{KNW} giving another source of examples in these groups. 
 
An approach to producing non-quasiconvex examples which are not necessarily normal 
 has been to use HNN extensions.
 For instance, in \cite{Kapovich} this is used to show that any non-elementary, torsion-free, hyperbolic group appears as a non-quasiconvex subgroup of a different hyperbolic group. 
In a similar vein~\cite{mitra, BBD, BDR} use iterated HNN extensions to construct free non-quasiconvex subgroups of hyperbolic groups with varying distortion functions.

\subsection{Applications}
We use our main theorem to construct several explicit examples of nonquasiconvex subgroups in hyperbolic groups with additional desirable properties. 
In a first such 
application, we construct
hyperbolic, $2$-dimensional RACGs, each of which contains a finitely generated, non-quasiconvex subgroup whose generators are all length $2$ words in the standard generating set of the RACG (see Example \ref{ex:2_gen}). 
As any subgroup of a RACG that is generated by length \textit{one} words is convex (and therefore quasiconvex), this construction gives minimal non-quasiconvex subgroups, in the sense that the generators are as short as possible.

In another application
(see Theorem \ref{thm:free_subgroup}),
 we construct
hyperbolic, $2$-dimensional, one-ended RACGs, each of which contains a finitely generated, 
non-quasiconvex
subgroup $F$ that is free.  Furthermore, $F \cap K$ is not normal in $K$ for any 
finite-index subgroup $K$ of the RACG.
By the latter property, these subgroups cannot be obtained, even virtually,
as kernels of any homomorphism 
(which, as discussed earlier, is a known strategy of producing non-quasiconvex examples).

The completions of our subgroups
have an explicit description, being analogues of well-understood, simple completions of subgroups of non-hyperbolic groups.  
This, together with  the fact that the fundamental group of a completion of a  torsion-free subgroup is the subgroup itself, provides an elementary proof that 
the examples from the previous paragraph are free. 

In Theorems \ref{thm:not_fp} and \ref{thm:surface_subgroups} respectively, 
we demonstrate how our method can be used to explicitly construct hyperbolic groups with subgroups that are not finitely presentable and with non-quasiconvex subgroups containing closed surface subgroups.

Wise defines a notion of sectional curvature for $2$-complexes \cite{Wise} and shows that the fundamental group of a compact Euclidean $2$-complex with negative sectional curvature is always locally quasiconvex (i.e., all finitely generated subgroups are quasiconvex) and is consequently coherent (i.e., all finitely generated subgroups are finitely presented).
On the other hand, fundamental groups of $2$-complexes with nonpositive 
sectional 
curvature provide a 
fringe case 
where coherence is conjectured  \cite[Conjecture 12.11]{Wise-survey} and yet local quasiconvexity can fail.
This is witnessed by hyperbolic free-by-cyclic groups, as these groups are non-locally quasiconvex (they contain a normal, infinite, infinite-index subgroup)
and can be realized as  
fundamental groups of $2$-complexes 
of nonpositive sectional curvature by \cite[Theorem~11.3]{Wise}.
We give  
explicit examples (see Theorem~\ref{thm:non_pos}) of this phenomenon by constructing 
non-locally quasiconvex 
hyperbolic RACGs that are virtually fundamental groups of right-angled square complexes of nonpositive sectional curvature.  These examples are likely to be virtually free-by-cyclic.

\subsection*{Acknowledgements}
IL would like to thank Michah Sageev for helpful conversations. The authors thank Jason Behrstock,  Mahan Mj, and Dani Wise and an anonymous referee for comments.   
\section{Background} \label{sec:background}

\subsection{Completions}
A \textit{cube complex} is a cell complex whose cells are Euclidean unit cubes (of any dimension). Given a graph $\Gamma$, a cube complex is \textit{$\Gamma$-labeled} if its edges are labeled by vertices of $\Gamma$. All $\Gamma$-labeled cube complexes considered in this article have the property that edges on opposite sides of squares have the same label. In particular, given the labels of $d$ edges which are  all incident to a common vertex and contained in a common $d$-cube,  the labels of the remaining edges of this $d$-cube are completely determined. This will often be used without mention.

By a \textit{path} in a cube complex $\Sigma$, we 
mean a simplicial path in the $1$-skeleton of $\Sigma$. The label of a path is a word $s_1 \cdots s_m$ in $V(\Gamma)$, such that $s_i$ is the label of the $i$th edge traversed by $\Sigma$.

Given a $\Gamma$-labeled cube complex $\Sigma$, we define three operations, each of which produces a new $\Gamma$-labeled cube complex (see~\cite{DL} for additional details):

\smallskip\noindent
\textbf{Fold operation:} Let $e$ and $f$ be distinct edges of $\Sigma$ which are incident to a common vertex and have the same label. 
A \textit{fold operation} produces a new complex in which $e$ and $f$ are identified to a single edge with the same label as $e$ and $f$. 

\smallskip\noindent
\textbf{Cube identification operation:} Given two or more distinct $d$-cubes in $\Sigma$ with common boundary, a cube identification operation identifies these cubes. 

\smallskip\noindent
\textbf{Cube attachment operation:} Let $e_1, \dots, e_d$, with $d \ge 2$, be edges of $\Sigma$ all incident to a common vertex, 
with distinct labels corresponding
to a $d$-clique of $\Gamma$. A \textit{cube attachment operation} attaches a labeled $d$-cube $c$ to $\Sigma$ by identifying a set of $d$ edges of $c$, all incident to a common vertex, to the edges $e_1, \dots, e_d$. Note that the labels of edges of $c$ are completely determined by the labels of $e_1, \dots, e_d$.

\smallskip\noindent
A $\G$-labeled cube complex is \textit{folded} if no fold or cube identification operations can be performed on it. 
It is \textit{cube-full} if, given any set of $d \ge 2$ edges which are incident to a common vertex and whose labels 
form $d$-clique of $\Gamma$, 
these edges are
contained in a common $d$-cube.

Let $\Sigma$ be a $\Gamma$-labeled cube complex. Let $\Omega_0, \Omega_1, \dots$ be a 
 sequence of cube complexes such that $\Omega_0 = \Sigma$ and, for $i > 0$, 
the complex $\Omega_i$ is obtained from $\Omega_{i-1}$ by performing one of the three above operations. 
There is a natural map $\Omega_i \to \Omega_{i+1}$.
 Let $\Omega$ be the direct limit of this sequence of complexes. We say that $\Omega$ is a \textit{completion} of $\Sigma$ if $\Omega$ is folded and cube-full. 
If $\Sigma$ is finite,
one can always 
construct a completion of $\Sigma$ 
\cite[Proposition~3.3] {DL}. 

Let $H$ be a finitely generated subgroup of a RACG $W_\Gamma$, generated by a set of words $T = \{w_1, \dots, w_m\}$ in $W_\Gamma$. 
The \textit{rose graph} associated to $T$ is the $\Gamma$-labeled graph with a base vertex $B$ and a $|w_i|$-cycle attached to $B$ with label $w_i$ for each $1 \le i \le m$. For this article, it suffices to define a \textit{completion of $H$ with respect to $T$} as a completion $\Omega$ of the rose graph associated to $T$. 
The image of 
$B$
in $\Omega$ is defined to be the base vertex of $\Omega$.

We say that a word $w$ in $V(\Gamma)$ is \textit{reduced} if it has minimal length out of all possible words in $V(\Gamma)$ representing the same element of $W_\Gamma$ as $w$.
We summarize the key properties of completions that we will need:

\begin{thm} \label{thm:completions} \cite{DL}
	Let $H$ be a subgroup of a RACG $W_\Gamma$ generated by a finite set $T$ of words in $W_\Gamma$. Let $\Omega$ be a completion of $H$ with respect to $T$
 with basepoint $B$. Then:
	\begin{enumerate}
		\item \label{thm:completions_qc} The completion $\Omega$ is finite if and only if $H$ is quasiconvex in $W_\Gamma$ with respect to the standard generating set of $W_\Gamma$.
		\item \label{thm:completions_torsion} The group $H$ has torsion if and only if there is some loop in the $1$-skeleton of $\Omega$ with label a reduced word whose letters are contained in a clique of $\Gamma$.
		\item \label{thm:completions_pi1} If $H$ is torsion-free, then $\Omega$ is non-positively curved and has fundamental group isomorphic to $H$.
		\item \label{thm:completions_loops} Any loop in $\Omega$ based at $B$ has label an element of $H$, and any reduced word which is an element of $H$ appears as the label of some loop based at $B$.
		\item \label{thm:completions_fi} Suppose additionally that $W_{\Gamma}$ does not split as a product with a finite factor 
		(equivalently, there does not exist a vertex of $V(\G)$ which is adjacent to every other vertex of $V(\G)$).
		Then $H$ is of finite index in $G$ if and only if a) $\Omega$ is finite, and b) for every vertex $v$ of $\Omega$ and for every $s \in V(\Gamma)$, there is an edge 
incident to $v$ in $\Omega$ which is labeled by $s$. 
	\end{enumerate}
\end{thm}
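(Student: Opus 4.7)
The plan is to establish the properties in an order that mirrors the construction: first understand what the three operations (fold, cube identification, cube attachment) do at the level of loop labels, and use this to prove \eqref{thm:completions_loops}; then use \eqref{thm:completions_loops} together with the geometry of the Davis complex to obtain the quasiconvexity equivalence \eqref{thm:completions_qc}; and finally leverage the nonpositive curvature machinery for \eqref{thm:completions_torsion}, \eqref{thm:completions_pi1}, and \eqref{thm:completions_fi}.

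First, for \eqref{thm:completions_loops}, I would show that each of the three operations preserves the set of words (in $V(\Gamma)$) that occur as labels of loops at $B$, modulo the relations in $W_\Gamma$. A fold identifies two edges with the same label and hence adds no new words up to the relation $s^2 = 1$; a cube identification changes nothing at the level of $1$-skeleton loops; a cube attachment whose edges are labeled by a clique of $\Gamma$ introduces new loops whose labels are of the form $stst^{-1}s^{-1}\dots$, i.e., exactly the commutation relations of $W_\Gamma$. Thus the label of every loop in $\Omega_i$ based at the image of $B$ represents an element of $H$, giving the forward direction. For the converse, given a reduced word $w \in H$, use the expression of $w$ as a product of the generators in $T$ to get a loop in the rose graph $R$, then track how this loop is modified under the sequence of operations. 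The main subtlety is that one must carefully use that $w$ is reduced together with the fact that the cube attachments correspond exactly to commutation relations, so that all necessary insertions/deletions of subwords $s^2$ and swaps of commuting letters are realized in $\Omega$.

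For \eqref{thm:completions_qc}, suppose first $\Omega$ is finite. Any reduced word in $H$ labels a loop at $B$ by \eqref{thm:completions_loops}, and its initial segments, being labels of paths in the finite complex $\Omega$, give elements of $W_\Gamma$ lying in a bounded neighborhood of $H$; this gives quasiconvexity. Conversely, if $H$ is quasiconvex, then one can build a completion by performing operations only within a bounded neighborhood of $H$ in the Davis complex and argue, via a standard finiteness argument, that the resulting folded and cube-full complex has finitely many vertices. The main obstacle here is verifying that the direct-limit construction terminates (up to relabeling) inside a bounded region; this is where the Stallings-style inductive argument on geodesic complexity really does the work.

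For \eqref{thm:completions_torsion} and \eqref{thm:completions_pi1}, the key observation is that the folded and cube-full conditions together imply that the link of every vertex of $\Omega$ is a flag complex: a set of edges at a vertex with labels forming a clique of $\Gamma$ necessarily spans a cube. Thus $\Omega$ is locally CAT(0). When $H$ is torsion-free, no loop labeled by a reduced clique word is present, which rules out the only obstruction to $\Omega$ being globally nonpositively curved, so $\Omega$ is NPC. The identification $\pi_1(\Omega, B) \cong H$ then follows from \eqref{thm:completions_loops}: loops at $B$ give a surjection to $H$, and the Cartan-Hadamard theorem together with an equivariant map from the universal cover $\widetilde\Omega$ to the Davis complex of $W_\Gamma$ (defined by labels) yields injectivity. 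The converse of \eqref{thm:completions_torsion} uses that finite subgroups of a RACG are conjugate to subgroups of clique parabolics and invokes \eqref{thm:completions_loops}. Finally, for \eqref{thm:completions_fi}, if $H$ has finite index then $\Omega$ is finite and the labeling condition at each vertex follows from coset exhaustion; conversely, the labeling condition lets one define a $W_\Gamma$-action on the vertex set of $\Omega$, and the hypothesis on $W_\Gamma$ rules out the degenerate case in which this action fails to be transitive even when $H$ exhausts cosets.
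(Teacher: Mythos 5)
This theorem is not proved in the present paper: it is stated as a citation to the companion paper \cite{DL}, so there is no in-text proof to compare your sketch against. The proposal is therefore being judged only on its own internal coherence, not against the authors' argument.

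As a high-level outline, your plan is reasonable, but several steps are glossed over in ways that would not survive being made rigorous. The most delicate point is the converse direction of \eqref{thm:completions_loops}. Starting from the expression of a reduced word $w \in H$ as a product of generators in $T$ gives a loop in the rose graph, and its image in $\Omega$ is a loop, but its \emph{label} is the unreduced concatenation of generators, not $w$ itself. What one actually needs is that within $\Omega$ one can pass from that loop to a loop labeled by $w$. For a RACG this works because Tits'/Matsumoto's theorem lets one pass from any word to a reduced representative using \emph{only} deletions of $ss$ and commutation swaps $st \leftrightarrow ts$, and both of those moves are realized in $\Omega$: foldedness forces any subpath labeled $ss$ to be a backtrack, and cube-fullness provides the square needed to replace a path $st$ by the opposite path $ts$. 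Your sketch instead speaks of ``insertions/deletions of $s^2$'' being realized in $\Omega$; insertions are generally \emph{not} available (a vertex of $\Omega$ need not carry an edge with every label), so as phrased the sentence suggests a move that fails. The argument should be reorganized around the deletion-and-swap-only form of Tits' theorem, in which case no insertion is ever required.

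Two further points are underdeveloped. In \eqref{thm:completions_qc}, the converse (quasiconvexity implies finiteness of $\Omega$) is the substantial direction, and ``a standard finiteness argument'' does not yet locate the key step: one must show that after the fold/identification/attachment sequence, the folded, cube-full limit embeds, in a label-preserving way, into a bounded neighborhood of $H$ in the Davis complex, and that a folded $\Gamma$-labeled complex which so embeds is finite. Without an explicit control mechanism (e.g.\ that in a folded complex there is at most one edge of each label at each vertex, so the vertex set injects into the ambient neighborhood), the ``termination'' claim is not established. In \eqref{thm:completions_pi1}, the injectivity of $\pi_1(\Omega,B)\to W_\Gamma$ needs more than ``Cartan--Hadamard plus an equivariant map'': one must exhibit a local isometry (or at least a $\pi_1$-injective map) from $\Omega$ into a nonpositively curved space on which $W_\Gamma$ acts, and verify that the labeled-map really is a local isometry using foldedness and cube-fullness; this is where the hypothesis that $H$ is torsion-free, via part \eqref{thm:completions_torsion}, is used to rule out the degenerate local pictures. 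Your sketch gestures at these ideas correctly but leaves the actual verifications open.
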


\subsection{Nonpositive sectional curvature}\label{sec:background_npsc}
We review the notion of nonpositive sectional curvature simplified  to our setting.
See \cite{Wise} and \cite{Wise-survey} for additional background.
	This material will only be needed in Section~\ref{sec:nonpos}.

Given a graph $\Theta$ with vertex set $V$ and edge set $E$, we define 
$\kappa(\Theta) := 2 - |V| +|E|/2$.
A \textit{spur} of $\Theta$ is an edge containing a vertex of valence one, and $\Theta$ is \textit{spurless} if it does not contain any spurs.

Let $\Sigma$ be a right-angled square complex.
Following~\cite{Wise}, we
say that $\Sigma$ has nonpositive sectional curvature if given any vertex $v$ of $\Sigma$ and any 
 subgraph $\Theta$ of the link of $v$ that is connected, spurless, and has at least one edge,
 it follows that $\kappa(\Theta) \le 0$. Similarly, we say that a simplicial graph $\Gamma$ has \textit{nonpositive sectional curvature} if all connected, spurless
 subgraphs $\Theta$ of $\Gamma$ 
 having at least one edge
 satisfy $\kappa(\Theta) \le 0$.

If a simplicial graph $\Gamma$ has nonpositive sectional curvature, then the (finite-index) commutator subgroup of the RACG $W_\Gamma$ is the fundamental group of a right-angled square complex of nonpositive sectional curvature. 

\section{$\Gamma$-partite graphs} \label{sec:partite_graphs}

Let $\G$ be a simplicial graph with vertex set $\{s_1, \dots, s_n\}$. We say that a simplicial graph~$\Delta$ is \emph{$\G$-partite} if its vertex set can be partitioned into $n$ non-empty, disjoint sets $A_1, \dots, A_n$ such that the following two conditions hold:
\begin{enumerate}
	\item No two vertices of $A_i$ are adjacent.
	\item For all $1 \le i < j \le n$, the subgraph $\Delta_{ij}$ of $\Delta$ induced by $A_i \cup A_j$ is connected if $s_i$ is adjacent to $s_j$ in $\Gamma$, and 
		is otherwise 
	 an edgeless graph, i.e., one with vertices but no edges. 
\end{enumerate}
We call $A_1, \dots, A_n$ the \textit{decomposition} of $\Delta$. 
We say that $\Delta$ has \textit{cycle connectors} if $\Delta_{ij}$ is either edgeless or a cycle for every $1 \le i < j \le n$. 
Similarly, we say that $\Delta$ has \textit{path connectors} if $\Delta_{ij}$ is either edgeless or a simple path for every $1 \le i < j \le n$.

The main result of this section is that given any simplicial graph $\Gamma$, there are $\Gamma$-partite graphs $\Delta$ that can be chosen so that $W_\Delta$ is hyperbolic and $2$-dimensional:

\begin{thm} \label{thm:square_free}
Given a finite simplicial graph $\G$, there exist
infinitely many
 $\G$-partite graphs which do not contain any simple $4$-cycles or $3$-cycles.  Additionally, these 
  $\G$-partite graphs can be constructed to have either cycle connectors or path connectors.
\end{thm}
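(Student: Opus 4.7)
The plan is to give an explicit construction that introduces freshly labeled vertices for each edge of $\Gamma$, so that the absence of short cycles reduces to a simple disjointness observation. Concretely, for each edge $e = \{s_i, s_j\}$ of $\Gamma$, I would pick an integer $k_e \ge 3$, introduce $2k_e$ new vertices, place $k_e$ of them in $A_i$ and $k_e$ in $A_j$, and join them as a bipartite cycle of length $2k_e$ alternating sides. For each vertex $s_i$ of $\Gamma$ that is isolated, I add one extra isolated vertex to $A_i$ to ensure nonemptiness. The resulting graph with vertex set $A_1 \sqcup \dots \sqcup A_n$ is the desired $\Delta$. For the path-connectors version, I would instead join the $2k_e$ vertices (with $k_e \ge 1$) as a bipartite simple path. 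Infinitely many distinct such graphs arise by varying $k_e$ for even a single fixed edge of $\Gamma$.

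Checking the $\Gamma$-partite conditions is immediate from the construction: within any $A_i$ there are no edges, since each added cycle or path is bipartite between its two sides. If $s_i \sim s_j$ in $\Gamma$, then $\Delta_{ij}$ is exactly the single cycle (resp.\ path) added for that edge, hence connected of the required type; if $s_i \not\sim s_j$, then no edge-structure involves both $A_i$ and $A_j$, so $\Delta_{ij}$ is edgeless.

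The main step is to rule out $3$- and $4$-cycles. The key observation is that by construction every non-isolated vertex of $\Delta$ belongs to exactly one edge-structure, so the entire neighborhood of any vertex $v \in A_i$ lies in a single part $A_j$. A potential $3$-cycle $v_1v_2v_3$ would have consecutive vertices in distinct parts (as no intra-part edges exist), forcing three distinct parts $A_i, A_j, A_k$; but then $v_2$ would have to participate in edge-structures for both $\{s_i, s_j\}$ and $\{s_j, s_k\}$, contradicting its unique membership. For a potential $4$-cycle $v_1v_2v_3v_4$, the same unique-membership trace applied along consecutive edges forces $v_1, v_3 \in A_i$ and $v_2, v_4 \in A_j$ for a single edge $\{s_i, s_j\}$ of $\Gamma$, with all four vertices lying in its associated edge-structure. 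This is impossible: cycles of length $\ge 6$ contain no $4$-cycle as a subgraph, and simple paths contain no cycle at all.

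The main obstacle I anticipated was arranging the $\Delta_{ij}$'s to be connected of the prescribed type while simultaneously preventing short cycles from arising across distinct edges of $\Gamma$. Using disjoint vertex contributions per edge of $\Gamma$ resolves both requirements at once, which is why this elementary construction succeeds; the short-cycle analysis then becomes purely local to a single edge-structure.
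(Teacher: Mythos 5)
Your construction has a genuine gap: it does not produce a $\Gamma$-partite graph unless $\Gamma$ is a disjoint union of edges (a matching). The problem is in the step where you claim ``If $s_i \sim s_j$ in $\Gamma$, then $\Delta_{ij}$ is exactly the single cycle (resp.\ path) added for that edge.'' Recall that $\Delta_{ij}$ is defined to be the subgraph of $\Delta$ induced by \emph{all} of $A_i \cup A_j$, and the $\Gamma$-partite condition requires this induced subgraph to be connected. But in your construction, if $s_i$ has degree $\ge 2$ in $\Gamma$, then $A_i$ is a disjoint union of several batches of freshly-introduced vertices, one batch per edge of $\Gamma$ incident to $s_i$. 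The batch of $A_i$-vertices coming from some other edge $\{s_i, s_\ell\}$ (with $\ell \ne j$) has all its $\Delta$-neighbors inside $A_\ell$, so those vertices are isolated in $\Delta_{ij}$. Hence $\Delta_{ij}$ is a $2k_e$-cycle (or path) together with a nonempty set of isolated vertices, which is neither connected nor a cycle nor a path. The same objection applies to the path-connectors version.

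The appeal of your approach---that disjointness of the per-edge structures makes the short-cycle analysis purely local---is exactly what kills the connectivity requirement; you cannot have both. The paper avoids this by using a single block $A_i$ of fixed size $k$ per vertex of $\Gamma$ and adding edges (not new vertices) between $A_i$ and $A_j$ for each edge of $\Gamma$, so that $\Delta_{ij}$ really is the full bipartite graph between $A_i$ and $A_j$. The edges are chosen number-theoretically (each $a_l^x \in A_x$ is joined to $a^y_{l+3^p}$ and $a^y_{l+2 \cdot 3^p}$ modulo $k$, where $p$ indexes the edge of $\Gamma$ and $k$ is large and coprime to $3$), which forces each $\Delta_{ij}$ to be a single $2k$-cycle while simultaneously ruling out short cycles via uniqueness of base-$3$ representations. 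Some global mechanism of this kind is needed precisely because the edge-structures for distinct edges of $\Gamma$ must share vertices whenever $\Gamma$ has a vertex of degree $\ge 2$.
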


\begin{figure}
	\includegraphics[scale=.7]{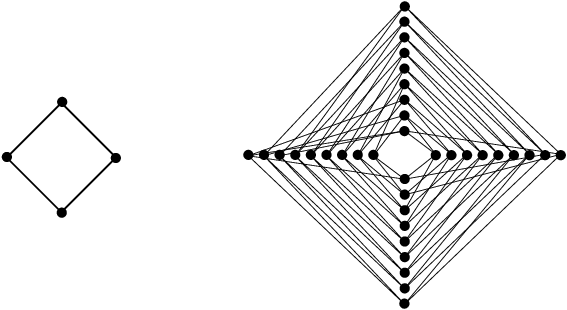}
	\caption{A graph $\Gamma$ on the left and, on the right, a $\Gamma$-partite graph with cycle connectors, no $3$-cycles and no $4$-cycles.} \label{fig:partite}
\end{figure}

\begin{proof}
	Let $\G$ have $E$ 	edges and let $V(\G)=\{s_1, \dots, s_n\}$. 
	 Choose a number $k> 8(3^E)$ 
	 which 
	 is not divisible by 3.  
	 We first construct a $\Gamma$-partite graph~$\Delta$ with decomposition $A_1, \dots, A_n$, such that $|A_i| =k$ for each $1 \le i \le n$, and such that 
$\Delta$ has cycle connectors. 
	
The vertex set of $\Delta$ is 
$A_1 \sqcup \cdots \sqcup A_n$.   
For each $1 \le i \le n$, we fix a labeling $a_0^i, \dots, a_{k-1}^i$ for the vertices of $A_i$.
To define the edge set of $\Delta$, we arbitrarily
	label the edges of $\Gamma$ as $e_1, \dots, e_E$, and arbitrarily orient each such edge.
	Fix such an edge $e_p$, and suppose $e_p$ is incident to the vertices $s_x$ and $s_y$ in $\Gamma$, and is oriented from $s_x$ to $s_y$.
Next, we define edges in $\Delta$ corresponding to $e_p$ as follows. 
Each vertex $a_l^x \in A_x$ is adjacent to exactly two vertices of $A_y$, namely $a_{l + 3^p}^y$ and $a_{l + 2(3^p)}^y$ (where arithmetic is modulo $k$). 
This choice of edges 
for each edge of $\G$
defines $\Delta$.
The following properties now follow:
	\begin{enumerate}
	 \item A vertex $a_{l'}^y \in A_y$ is adjacent to two vertices of $A_x$: $a_{l' - 3^p}^x$ and $a_{l' - 2(3^p)}^x$.
	 \item If $a_i^x$ and $a_j^x$ are distinct vertices of $A_x$ which share a neighbor in $A_y$, then 
	 $|i - j| \equiv 3^p \pmod k$.
	 Similarly,  if $a_{i'}^y$ and $a_{j'}^y$ are distinct vertices of $A_y$ which share a neighbor in $A_x$, then $|i' - j'| \equiv 3^p \pmod k$.
	 \item As $k$ is not divisible by $3$, 
	 it follows that $3^p$ and $k$ are relatively prime. 
	 	Thus by (2), $\Delta_{xy}$ is a simple $2k$-cycle.
	 \end{enumerate}
	
	It 
	follows that $\Delta$ is $\Gamma$-partite (by construction) and that $\Delta$ has cycle connectors (by 
	(3)). 
	
	We now check that there are no simple $4$-cycles in $\Delta$. First note that, for each $1 \le i \le n$, no simple $4$-cycle 
	has all its vertices
	in $A_i$, as there are no edges between vertices
	 in $A_i$. Moreover, no $4$-cycle 
	has all its vertices
in $A_i \cup A_j$ for some $1 \le i < j \le n$, as 
$\Delta_{ij}$ is either a simple $2k$-cycle with $k > 4$ or an edgeless graph. 
	
	Suppose there is a simple $4$-cycle $c$
	with vertices
	in $A_{i_0} \cup A_{i_1} \cup A_{i_2}$ for some $\{i_0, i_1, i_2\} \subset \{1, \dots, n\}$. Without loss of generality, we may assume that there 
are edges $e_p$, between $s_{i_0}$ and $s_{i_1}$, and 
$e_{q}$, between $s_{i_1}$ and $s_{i_2}$, in $\G$, and that
	$c$ contains two vertices $a_u^{i_1}$ and $a_v^{i_1}$ in $A_{i_1}$. As $c$ is a $4$-cycle, $a_u^{i_1}$ and $a_v^{i_1}$ have a common neighbor  in $A_{i_0}$ and a common neighbor in $A_{i_2}$. However, by 
		(2)
	above, we have that 
	 $|u - v| \equiv 3^p \pmod k$
	 and
	  $|u - v| \equiv 3^q \pmod k$.
	 This is a contradiction as $p \neq q$ and $k >  8(3^E) >  |3^p - 3^q|$. Thus, such a cycle $c$ cannot exist.
	
	Finally, suppose there is a 
	$4$-cycle $c$
	with vertices 
	in $A_{i_0} \cup A_{i_1} \cup A_{i_2} \cup A_{i_3}$ for some $\{i_0, i_1, i_2, i_3\} \subset \{1, \dots, n\}$, such that for each $0 \le j \le 3$, the cycle $c$  contains exactly one vertex in $A_{i_j}$ and there is an edge $e_{p_j}$ in $\Gamma$ between $s_{i_j}$ and $s_{i_{j+1}}$ (subscripts taken modulo $4$).
	
	To see that such a cycle $c$ cannot exist, we consider an arbitrary length $4$ path~$\gamma$ in $\Delta$ with startpoint 
		$a_l^{i_0} \in A_{i_0}$,
	 endpoint 
	$a_{l'}^{i_0} \in A_{i_0}$,
	and exactly one vertex in each of $A_{i_1}, A_{i_2}$ and $A_{i_3}$. 
	 It follows from the definition of edges of $\Delta$ and from 
	 	(1) above that:
	\[ l'  \equiv 	l  + \sum_{m = 0}^3 \epsilon_m 3^{p_m}    \pmod{k}\]
	where $\epsilon_m \in \{-1, -2, 1, 2\}$ for each $0 \le m \le 3$. 
	
	We claim that $\sum_{m = 0}^3 \epsilon_m 3^{p_m} \neq 0$. Suppose otherwise, for a contradiction. 
	Let $C$ (resp.~$C'$) be the sum of the positive  
	(resp.~absolute values of the negative)
	terms of $\sum_{m = 0}^3 \epsilon_m 3^{p_m}$.
	It follows that $C=C'$. However, as the numbers $p_0, p_1, p_2$ and $p_3$ are all distinct and positive, this implies that we have two ways to represent the number $C$ in base~$3$. This contradicts the basis representation theorem. We have thus shown that $\sum_{m = 0}^3 \epsilon_m 3^{p_m} \neq 0$.
	
	As $|\sum_{m = 0}^3 \epsilon_m 3^{p_m}| < k$ by our choice of $k$, and as $\sum_{m = 0}^3 \epsilon_m 3^{p_m} \neq 0$, it follows that  $l' \not\equiv l \pmod k$.
Thus, $a_l^{i_0}$ and $a_{l'}^{i_0}$ are distinct vertices.  We have then shown that there cannot be any 
$4$-cycle $c$ as described above. 

A similar
	argument
shows that $\Delta$ does not contain $3$-cycles. 
	This completes the construction of the claimed $\Gamma$-partite graph $\Delta$ with cycle connectors and no 
		simple
	$3$-cycles or $4$-cycles.
	To produce a $\Gamma$-partite graph $\Delta'$ with path connectors and no 
		simple
	$3$-cycles or $4$-cycles, we can, for each edge $(s_i, s_j)$ of $\Gamma$, remove an arbitrary edge from $\Delta_{ij}$.
\end{proof}

\begin{remark}
	The proof of the above theorem gives an explicit, easily implementable construction for $\Delta$, not just an existential one.
\end{remark}

Recall that a RACG is not one-ended if and only if its defining graph is either a clique, 
is disconnected, 
or is separated by a clique (see \cite[Corollary 2.4]{Dani}). From this and the structure of $\Gamma$-partite graphs, we immediately conclude:

\begin{lemma}\label{lem:one_ended}
	Let $\Gamma$ be a  finite simplicial graph, and let $\Delta$ be a $\Gamma$-partite graph. If $W_\Gamma$ is one-ended, then so is $W_\Delta$.
\end{lemma}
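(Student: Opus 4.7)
The plan is to invoke the characterization of one-endedness recalled just before the lemma: $W_\Delta$ is one-ended if and only if $\Delta$ is connected, is not a clique, and is not separated by any clique. Since $W_\Gamma$ is one-ended, these three properties hold for $\Gamma$, and I will transfer each to $\Delta$ using only the defining properties of $\Gamma$-partite graphs: each part $A_i$ is independent in $\Delta$, and $\Delta_{ij}$ is connected precisely when $s_is_j$ is an edge of $\Gamma$ (and edgeless otherwise).

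The first two properties are immediate. Since $\Gamma$ is not a clique, some pair $s_i, s_j$ is non-adjacent in $\Gamma$, so $\Delta_{ij}$ is edgeless and $\Delta$ contains non-adjacent vertices. For connectedness, given vertices of $\Delta$ in parts $A_i$ and $A_j$, lift any path from $s_i$ to $s_j$ in $\Gamma$ to a walk in $\Delta$ by moving successively through each connected piece $\Delta_{i_l i_{l+1}}$.

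The substantive step is non-separation. Let $K \subseteq V(\Delta)$ be a clique. Since each $A_i$ is independent, $K$ meets each $A_i$ in at most one vertex, so we may set $I := \{i : A_i \cap K \neq \emptyset\}$ and $K' := \{s_i : i \in I\} \subseteq V(\Gamma)$. Because any edge of $\Delta$ between distinct parts witnesses an edge of $\Gamma$, $K'$ is a clique in $\Gamma$, and then the assumption that $\Gamma$ is not separated by a clique forces $\Gamma \setminus K'$ to be connected and nonempty (the latter because $\Gamma$ is not a clique, so $|V(\Gamma)| > |K'|$). I would then connect any two vertices of $\Delta \setminus K$ in two steps: (a) any $u \in A_a \setminus K$ with $a \in I$ is moved, along a single edge of $\Delta \setminus K$, to a vertex of some $A_b \setminus K$ with $b \notin I$; (b) any two vertices of $\bigcup_{i \notin I} A_i$ are joined inside $\Delta \setminus K$ by lifting a path in $\Gamma \setminus K'$ through the connected bipartite pieces $\Delta_{i_l i_{l+1}}$, each of which is disjoint from $K$ because both endpoint indices lie outside $I$. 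Applying (a) to either endpoint lying in a clique-covered part and then (b) completes the desired path.

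The main obstacle is Step (a), which reduces to the following graph-theoretic fact about $\Gamma$: every $s_a \in K'$ has a neighbor in $V(\Gamma) \setminus K'$. Otherwise $N_\Gamma(s_a) \subseteq K' \setminus \{s_a\}$ would itself be a clique, and it would separate $s_a$ from the nonempty set $V(\Gamma) \setminus K'$, contradicting the non-separation hypothesis on $\Gamma$. Given such a neighbor $s_b$, the connectedness of $\Delta_{ab}$ forces $u$ to have at least one neighbor $w \in A_b$ (else $u$ would be isolated in $\Delta_{ab}$), and $w \notin K$ since $b \notin I$; this provides the required edge. The only delicate point is to rule out the degenerate configuration $|V(\Gamma) \setminus K'| = 1$, where Step (b) could otherwise fail for two vertices in a single part outside $I$; this configuration is excluded by a variant of the same clique-separation argument applied to the unique vertex of $V(\Gamma) \setminus K'$.
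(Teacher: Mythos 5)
Your proof is correct and follows exactly the route the paper indicates: the paper gives no proof, simply stating that the lemma is ``immediate'' from Davis's characterization of one-endedness and the structure of $\Gamma$-partite graphs. You have carefully verified each of the three conditions (connected, not a clique, no separating clique) transfers from $\Gamma$ to $\Delta$, including the only genuinely non-trivial point --- that a vertex $s_a$ in the clique $K'$ must have a neighbor outside $K'$ and that the degenerate case $\lvert V(\Gamma)\setminus K'\rvert =1$ is impossible --- both of which you correctly settle via the no-separating-clique hypothesis on $\Gamma$.
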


\section{Main Construction} \label{sec:main}

We give our main construction and result 
(Theorem~\ref{thm:functor} 
and Corollary~\ref{cor:nonquasiconvex})
in this section. 
Throughout this section, 
 we fix a simplicial graph $\Gamma$ with  vertex set $\{s_1, \dots, s_n\}$ and a $\Gamma$-partite graph $\Delta$ with decomposition $A_1, \dots, A_n$. 

Let $\Pi$ be a $\Delta$-labeled cube complex. Given $1 \le i \le n$ and vertices $u$ and $v$ of $\Pi$, a \textit{generalized edge between $u$ and $v$ with label $A_i$} is a collection of $|A_i|$ edges between $u$ and $v$ whose labels are in bijection with the vertices of $A_i$. Note that $u$ and $v$ could be equal.

Given a $\Gamma$-labeled cube complex $\Sigma$, we construct an associated $\Delta$-labeled cube complex $\overline \Sigma$. 
We call $\overline \Sigma$ the \textit{generalization of $\Sigma$ with respect to $\Delta$}, and we will always use the bar notation  to denote the generalization of a cube complex. When there is no confusion regarding the $\Gamma$-partite graph $\Delta$, we simply say that $\overline \Sigma$ is the generalization of $\Sigma$.

The vertex set of $\overline \Sigma$ is in bijective correspondence with that of $\Sigma$. The edges of $\overline \Sigma$ are defined as follows: for each edge $e$ of $\Sigma$ there is a  corresponding
generalized edge with label $A_i$ between the corresponding vertices of $\overline \Sigma$, where $s_i$ is the label of $e$. All edges of $\overline \Sigma$ are defined this way.

Note that the graph obtained from the $1$-skeleton of $\overline \Sigma$ by
collapsing generalized edges with label $A_i$ to single edges labeled by $s_i$, for each $1 \le i \le n$, is isomorphic to 
the 1-skeleton of $\Sigma$.  We call the resulting map $\overline \Sigma^1 \to \Sigma^1$ the \textit{collapsing} map.
A \textit{generalized $d$-cube} in $\overline \Sigma$ is a set of $2^d$ generalized edges whose image under the collapsing map is the $1$-skeleton of a $d$-cube of $\Sigma$.

Let $c$ be a $d$-cube in $\Sigma$, with $d \ge 2$, whose edges are labeled by distinct vertices $s_{i_1}, \dots, s_{i_d}$ of $\Gamma$.  By construction, there is a
	corresponding
 generalized $d$-cube $\overline c$ in  $\overline \Sigma^{1}$
 with generalized edges labeled by $A_{i_1}, \dots, A_{i_d}$,
 such that the image of $\bar c$ under the collapsing map is $c$. 
  For each $(a_1, \dots, a_d) \in A_{i_1} \times \dots \times A_{i_d}$ such that $a_1, \dots, a_d$ span a $d$-clique in $\Delta$, 
the complex $\overline \Sigma$ contains a single $d$-cube whose $1$-skeleton is the corresponding subset of $2^d$ edges of $\overline c$.
Note that even if some other $d$-cube in $\Sigma$ has the same boundary as $c$, there is only one $d$-cube in $\overline\Sigma$ for each such tuple. 
All $d$-cubes of $\overline \Sigma$, with $d \ge 2$, are defined this way. This completes the construction of~$\overline \Sigma$.
By construction, no cube identification operations can be performed on $\overline \Sigma$.

The next lemma is the key observation that allows us to prove our main theorem.

\begin{lemma} \label{lem:completion}
	Let $\Sigma$ be a 
finite
	$\Gamma$-labeled cube complex, and let $\overline \Sigma$ be the $\Delta$-labeled cube complex which is its generalization. Let $\Omega$ be a completion of $\Sigma$. Then there is a completion of $\overline \Sigma$ that is isomorphic to the generalization $\overline \Omega$ of $\Omega$.
\end{lemma}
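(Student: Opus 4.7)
My plan is to verify the two defining properties of a completion for $\overline \Omega$: that it is folded and cube-full, and that it arises as the direct limit of a sequence of fold, cube identification, and cube attachment operations starting from $\overline \Sigma$. The folded and cube-full conditions follow quickly from the collapsing map $\overline \Omega^1 \to \Omega^1$. Two distinct edges of $\overline \Omega$ at a common vertex sharing a label $a \in A_i$ would lie in different generalized edges and project to two distinct edges at a common vertex of $\Omega$ both labeled $s_i$, contradicting that $\Omega$ is folded. For cube-fullness, any $d$ edges at a common vertex of $\overline \Omega$ whose labels span a $d$-clique of $\Delta$ must lie in distinct partition sets $A_{i_1}, \ldots, A_{i_d}$ with $s_{i_1}, \ldots, s_{i_d}$ spanning a $d$-clique of $\Gamma$ (a consequence of the $\Gamma$-partite condition, since adjacent vertices of $\Delta$ lie in distinct $A_i, A_j$ with $s_i, s_j$ adjacent in $\Gamma$). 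These project to $d$ edges at a common vertex of $\Omega$ bounding a $d$-cube by cube-fullness of $\Omega$, and the generalization construction produces a $d$-cube of $\overline \Omega$ for the given tuple that contains the original $d$ edges.

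For the direct-limit part, I would induct on the length of a sequence $\Sigma = \Omega_0, \Omega_1, \ldots$ whose direct limit is $\Omega$, showing that each step $\Omega_i \to \Omega_{i+1}$ can be realized by a finite sequence of operations carrying $\overline{\Omega_i}$ to $\overline{\Omega_{i+1}}$. A cube identification in $\Omega_i$ requires no work: by the remark that the generalization contains only one $d$-cube per tuple even when $\Sigma$ has multiple $d$-cubes sharing a boundary, we have $\overline{\Omega_i} = \overline{\Omega_{i+1}}$. A fold in $\Omega_i$ of edges $e, f$ at $v$ with label $s_j$ lifts to $|A_j|$ parallel folds in $\overline{\Omega_i}$, one per label in $A_j$, pairing corresponding edges in the two generalized edges at $v$; any $d$-cubes whose $1$-skeletons coincide as a result are then merged by cube identifications. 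A cube attachment in $\Omega_i$ of a $d$-cube $c$ along $e_1, \ldots, e_d$ at $v$ lifts to a sequence of cube attachments in $\overline{\Omega_i}$, one per tuple $(a_1, \ldots, a_d) \in A_{i_1} \times \cdots \times A_{i_d}$ spanning a $d$-clique in $\Delta$, interleaved with folds that coalesce the duplicated new vertices and cube identifications that remove resulting duplicate cubes.

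The main obstacle is this last case, since each attachment introduces its own $2^d - 1 - d$ new vertices while $\overline{\Omega_{i+1}}$ assigns all tuples a single shared set. I would resolve this using the $\Gamma$-partite property, which guarantees that $\Delta_{i_k i_l}$ is connected for each $k \neq l$. For $d = 2$, the tuples form the edge set of $\Delta_{i_1 i_2}$; ordering the square attachments along a spanning tree of $\Delta_{i_1 i_2}$ ensures that consecutive attachments share a coordinate $a$, so folding the pair of $a$-labeled edges at the shared-coordinate vertex identifies their new vertices. For $d \geq 3$, I would iteratively apply the $d = 2$ argument within each $2$-dimensional sub-face of the attached generalized $d$-cube, using the connectivity of the appropriate $\Delta_{i_k i_l}$ to merge new vertices face-by-face; the remaining higher-dimensional identifications follow from the iterated folds, and any duplicate $d$-cubes produced along the way are eliminated by cube identifications, yielding a complex isomorphic to $\overline{\Omega_{i+1}}$.
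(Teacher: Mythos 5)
Your proposal is correct and follows essentially the same route as the paper: you verify foldedness and cube-fullness via the collapsing map exactly as the paper does, and for the direct-limit part you handle the three operations one at a time — cube identification is trivial on generalizations, a fold lifts to $|A_i|$ folds plus cube identifications, and a square attachment lifts to one square per edge of $\Delta_{ij}$ with folds merging the new vertices by the connectivity of $\Delta_{ij}$ (your spanning-tree ordering of the attachments is a cosmetic variant of the paper's argument, which exhibits an explicit path in $\Delta_{ij}$ between the two coordinate pairs). The handling of $d>2$ by building the $2$-skeleton face-by-face and then filling in higher cubes also matches the paper's reduction to the $d=2$ case.
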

\begin{proof}
	We first establish three claims, the first of which is obvious by construction. 
		\begin{claim}
	 If a cube identification operation can be performed on $\Sigma$ to obtain a new complex $\Theta$, then $\overline \Sigma$ is isomorphic to $\overline{\Theta}$.
	\end{claim}

	\begin{claim}
	 If a fold operation can be performed on $\Sigma$ to obtain a new complex $\Theta$, then a sequence of fold and cube identification operations can be performed on $\overline \Sigma$ to obtain $\overline{\Theta}$.
	\end{claim}

	\begin{proof}Suppose that there are edges $e$ and $f$ in $\Sigma$, each incident to a common vertex $v$ and each labeled by the same $s_i \in V(\Gamma)$. Then there are
			corresponding
		 generalized edges $\overline e$ and $\overline f$ in $\overline \Sigma$, each incident to a common vertex and each labeled by $A_i$. By performing $|A_i|$ folds, we identify $\overline e$ and $\overline f$ into a single generalized edge labeled by $A_i$. Next, we perform all possible cube identification operations to the resulting complex. This final resulting complex is then isomorphic to $\overline \Theta$.\end{proof}

	\begin{claim}
	If a cube attachment operation can be performed on $\Sigma$ to obtain a new complex $\Theta$, then a sequence of cube attachment, fold and cube identification operations can be performed on $\overline \Sigma$ to obtain $\overline \Theta$.
	\end{claim}
\begin{proof}Suppose first that such a cube attachment operation attaches a $d$-cube to $\Sigma$ with $d = 2$. It follows that there are edges $e$ and $f$ in $\Sigma$, incident to a common vertex and
 with labels $s_i$ and $s_j$ respectively, such that $s_i$ and $s_j$ are adjacent vertices of $\Gamma$. 
	In $\overline \Sigma$ we see corresponding generalized edges $\overline e$ and $\overline f$, 
	incident to
	a common vertex and labeled by $A_i$ and $A_j$ respectively.
	
	Let $x$ be an edge of $\Delta$ incident to 
		some 
	$u \in A_i$ and 
	$v \in A_j$. There is an edge in $\overline e$ with label $u$ and an edge in $\overline f$ with label $v$. We perform a square attachment operation to $\overline \Sigma$ by attaching a square with boundary label $uvuv$ to $\overline \Sigma$ along these two edges. We perform such a square attachment for every 
	edge $x$  between $A_i$ and $A_j$,
	and we denote the resulting complex by $\Pi$.
	
	Next, we perform all possible fold operations 
	between pairs of new edges that were attached in the process of attaching squares
	to $\overline \Sigma$.
	Let $\Pi'$ denote the final resulting complex from this sequence of operations. Note that $\overline \Sigma$ is naturally a subcomplex of both $\Pi$ and $\Pi'$. 
	
	We claim that $\Pi'$ is isomorphic to $\overline \Theta$. To prove this, it is enough to show that all vertices of $\Pi \setminus \overline \Sigma$ are identified to a common vertex of $\Pi'$. Thus, suppose that $z$ and $z'$ are vertices of $\Pi \setminus \overline \Sigma$ which are each contained in a square that was attached to $\overline \Sigma$. The labels of these squares are $abab$ and $a'b'a'b'$ for some $a, a' \in A_i$ and $b, b' \in A_j$. As $\Delta_{ij}$ is connected 
(by the definition of $\Gamma$-partite graphs), there is a path $a_1 = a, b_1 = b, a_2, b_2 \dots, a_{m-1}, b_{m-1}, a_m = a', b_m=b'$
	 in $\Delta$ with $a_l \in A_i$   and $b_l \in B_j$ for all $1 \le l \le m$. Correspondingly, there are squares with labels $a_lb_la_lb_l$ and $b_la_{l+1}b_la_{l+1}$ in $\Pi$ for all $1 \le l \le m-1$. After folding, all vertices contained in one of these squares which are not contained in $\overline \Sigma$ are identified. Thus, $\Pi'$ is isomorphic to $\overline \Theta$.
	
	  Suppose now that a cube attachment operation attaches a $d$-cube to $\Sigma$ with $d > 2$. From the previous case, it readily follows that there is a sequence of square attachment and fold operations that can be performed to  $\overline \Sigma$ to produce a complex whose 
	  $2$-skeleton
	  is isomorphic to that of $\overline \Theta$. It is clear that we can obtain $\overline \Theta$ by performing a series of cube attachments, folds,
	  and cube identifications to this complex.
	  \end{proof}
	We are now ready to prove the lemma. Let  $\Omega$ be a completion of $\Sigma$ obtained as the direct limit of a sequence $\Omega_0 = \Sigma \to \Omega_1 \to \Omega_2 \to \dots$ of complexes as in the definition of a completion.
	By the three claims above, there exists a sequence $\Pi_0 = \overline \Sigma \to \Pi_1 \to \Pi_2 \to \dots $ of complexes such that $\Pi_{i+1}$ is obtained from $\Pi_i$ by either a cube attachment, cube identification or fold operation, and such that there exists a subsequence $\Pi_0 = \Pi_{i_0}, \Pi_{i_1}, \Pi_{i_2}, \dots $ such that $\Pi_{i_j} = \overline{\Omega_j}$.
	Let $\Pi$ be the direct limit of $\Pi_0 = \overline \Sigma \to \Pi_1 \to \Pi_2 \to \dots $.
	It follows from the 
	definitions of direct limit and generalization of a complex
	that $\Pi$ is isomorphic to $\overline \Omega$. 
	
	In order to conclude that $\Pi=\overline \Omega$ is indeed a completion of $\overline \Sigma$, we must show that it is also folded and cube-full.	
		As $\Omega$ is a completion, it is folded and cube-full. 
	Let $\phi: \overline \Omega^1 \to \Omega^1$ be the collapsing map. 
	To see that $\overline \Omega$ is folded, let $e$ and $e'$ be edges of $\overline \Omega$ with the same label
and incident to a common vertex. Then, by the definition of a collapsing map, $\phi(e)$ and $\phi(e')$ are incident to a common vertex and have the same label.  
As $\Omega$ is folded, $\phi(e) = \phi(e')$.
Thus, $e$ and $e'$ must lie in a common generalized edge. As they have the same label, it follows that $e=e'$.
Finally, 
	since no cube identification operations can be performed on $\overline \Omega$ by definition, we conclude that $\overline \Omega$ is folded.
	
	To see that $\overline \Omega$ is cube-full, consider a set $e_1, \dots, e_d$ of edges in $\overline \Omega$  with $d \ge 2$, all incident to a common vertex and with labels $t_1, \dots, t_d$ corresponding to a $d$-clique of $\Delta$. For $1 \le i \le d$, let $A_i$ be the set in the decomposition of $\Delta$ that contains the vertex $t_i$.
	As there are no edges between vertices of $A_i$ for any  $i$, it follows that $A_i \neq A_j$ for all $1 \le i 	\neq j	\le d$.
	Thus, by the structure of $\Gamma$-partite graphs, the labels of $\phi(e_1), \dots, \phi(e_d)$ span a $d$-clique in $\Gamma$. As $\Omega$ is cube-full, $\phi(e_1), \dots, \phi(e_d)$ are contained in a common $d$-cube. Consequently, as $\overline \Omega$ is the generalization of $\Omega$, there is a $d$-cube in $\overline \Omega$ containing $e_1, \dots, e_d$.
	Thus, $\overline \Omega$ is cube-full. 
\end{proof}

Let $H$ be a 
 subgroup of $W_\Gamma$ generated by a finite set $T$ of words in $W_\Gamma$, and let $R$ be the rose graph associated to $T$. 
Let $\overline R$ be the generalization of $R$ with respect to $\Delta$.
Let $l_1, \dots, l_m$ be a set of loops in $\overline R$, based at the base vertex, which generate $\pi_1(\overline R)$, and let $\overline T = \{w_1, \dots, w_m\}$ be the set of labels of these loops.
We define the \textit{generalization $\overline H$ of $H$ with respect to $\Delta$ and $T$} to be the finitely generated subgroup of $W_\Delta$ that is generated by $\overline T$. 
We remark that the isomorphism class of $\overline H$ very much depends on $T$ and $\Delta$, but not on $\overline T$.  
When there is no confusion regarding $T$ and $\Delta$, we simply say that $\overline H$ is the generalization of $H$. We will always use the bar notation as such to denote the generalization of a group.

\begin{thm} \label{thm:functor}
	Let $\Gamma$ be a simplicial graph, and let $\Delta$ be a $\Gamma$-partite graph. 
Let $H$ be a subgroup of $W_\Gamma$ generated by a finite set $T$ of words in $W_\Gamma$, and let $\overline H < W_\Delta$ be its generalization with respect to $\Delta$ and $T$. Then
\begin{enumerate}
	\item 
$H$ is quasiconvex in $W_\Gamma$ if and only if $\overline H$ is quasiconvex in $W_\Delta$, where quasiconvexity is with respect to the standard generating sets of $W_\Gamma$ and  $W_\Delta$ respectively. 
	\item If $H$ is torsion-free, then so is $\overline H$.
	\item If $H$ is not normal in $W_\Gamma$, then $\overline H$ is not normal in $W_\Delta$.
	\item Suppose additionally that $W_{\Gamma}$ does not split as a product with a finite factor 
		(equivalently, there does not exist a vertex of $V(\G)$ which is adjacent to every other vertex of $V(\G)$).
	Then $H$ is a finite index subgroup of $W_\Gamma$ if and only if $\overline H$ is a finite index subgroup of $W_\Delta$.
\end{enumerate}
Additionally, if $\Delta$ does not contain induced $4$-cycles, 
 (resp.~$3$-cycles), then $W_\Delta$ is hyperbolic (resp.~$2$-dimensional).
\end{thm}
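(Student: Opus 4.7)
My strategy is to use Lemma~\ref{lem:completion} as the bridge between the completions of $H$ and $\overline H$. Concretely, if $\Omega$ is a completion of the rose graph $R$ of $T$, then $\overline \Omega$ is a completion of $\overline R$, and since $\overline T$ labels a choice of generators of $\pi_1(\overline R)$, the rose graph of $\overline T$ folds onto $\overline R$; hence $\overline \Omega$ is also a completion of $\overline H$ with respect to $\overline T$. The collapsing map $\phi:\overline\Omega^1\to\Omega^1$ is at most $\max_i |A_i|$-to-one on edges, so $\Omega$ is finite iff $\overline\Omega$ is, and part~(1) follows from Theorem~\ref{thm:completions}(\ref{thm:completions_qc}).

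For part~(2) I would argue contrapositively. If $\overline H$ has torsion, Theorem~\ref{thm:completions}(\ref{thm:completions_torsion}) yields a loop $\gamma$ in $\overline\Omega^1$ with reduced label whose letters form a clique $K$ in $\Delta$. Since no two vertices in the same part $A_i$ are adjacent in $\Delta$, $K$ meets each $A_i$ at most once, so $\phi$ sends these letters to pairwise distinct vertices of $\Gamma$; the $\Gamma$-partite axiom then forces these to form a clique in $\Gamma$. A word of distinct letters drawn from a clique is automatically reduced, so $\phi(\gamma)$ is a loop in $\Omega^1$ witnessing torsion in $H$ by Theorem~\ref{thm:completions}(\ref{thm:completions_torsion}).

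The bulk of the new work is (3) and (4). For (3) I would introduce the surjective homomorphism $\pi:W_\Delta\twoheadrightarrow W_\Gamma$ defined by $t\mapsto s_i$ whenever $t\in A_i$; it is well-defined because every edge of $\Delta$ connects distinct parts $A_i,A_j$ with $(s_i,s_j)\in E(\Gamma)$, so all Coxeter commutation relations are respected. The content is proving $\pi(\overline H)=H$. One inclusion is immediate: $\pi$ carries each $w\in\overline T$, viewed as a word in $V(\Delta)$, to the label of its image loop under collapsing, which is an element of $H$. For the reverse, each generator of $H$, viewed as a defining loop of $R$, lifts to a loop $\tilde\ell$ in $\overline R$ (choose any edge inside each generalized edge); since $\overline T$ generates $\pi_1(\overline R)$, the label of $\tilde\ell$ represents an element of $\overline H$ in $W_\Delta$, and this element projects under $\pi$ to the chosen generator of $H$. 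Normality of $\overline H$ in $W_\Delta$ then descends through $\pi$ to normality of $H=\pi(\overline H)$ in $\pi(W_\Delta)=W_\Gamma$.

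For (4) I would first verify that the no-central-vertex hypothesis transfers from $\Gamma$ to $\Delta$: a vertex $t\in A_i$ adjacent to every other vertex of $\Delta$ would force $|A_i|=1$ (since vertices in a common part are non-adjacent) and $s_i$ adjacent to every other $s_j$ in $\Gamma$, a contradiction. Thus Theorem~\ref{thm:completions}(\ref{thm:completions_fi}) applies on both sides; the finiteness equivalence is (1), and at each vertex of $\overline\Omega$ the generalized-edge structure sets up a bijection between edges labeled by vertices of $A_i$ and the single edge of $\Omega$ labeled $s_i$ at the corresponding vertex, giving the equivalence of the edge-completeness conditions. The final sentence of the theorem is the standard characterization for RACGs: no induced $4$-cycle in $\Delta$ is equivalent to hyperbolicity of $W_\Delta$, and no triangle in $\Delta$ is equivalent to the Davis complex being at most $2$-dimensional. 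The main obstacle I anticipate is the identity $\pi(\overline H)=H$ in (3): one must carefully interpret the label of a lifted loop as an element of the subgroup $\overline H$ (and not merely of $W_\Delta$), which crucially relies on the choice of $\overline T$ as labels of a generating set for $\pi_1(\overline R)$ rather than an arbitrary lift.
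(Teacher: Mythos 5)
Your proposal is correct and, for parts (1), (2), and (4), follows essentially the same route as the paper: establish via Lemma~\ref{lem:completion} that the generalization $\overline\Omega$ of a completion $\Omega$ of $H$ is a completion of $\overline H$, then read quasiconvexity and finite index off Theorem~\ref{thm:completions}(\ref{thm:completions_qc}) and (\ref{thm:completions_fi}), and push a torsion-witnessing clique-labeled loop through the collapsing map for (2) exactly as the paper does.

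Where you diverge is part (3), and your route is genuinely different and arguably cleaner. The paper proves (3) by a contradiction argument entirely inside the completion machinery: it uses Theorem~\ref{thm:completions}(\ref{thm:completions_loops}) to lift a specific $h\in H$ to a word $\overline h\in\overline H$, picks an arbitrary $\overline g$ with $\phi(\overline g)=g$, assumes $\overline g\,\overline h\,\overline g^{\,-1}\in\overline H$, and deduces via loops in $\overline\Omega$ and $\Omega$ that $ghg^{-1}\in H$, a contradiction. You instead prove the stronger and more transparent identity $\pi(\overline H)=H$ directly at the level of rose graphs, $\pi(\overline T)\subseteq H$ via the collapsing map $\overline R\to R$ and $T\subseteq\pi(\overline H)$ via arbitrary edge-lifts of petals of $R$ into $\overline R$, and then invoke the elementary fact that a surjective homomorphism carries normal subgroups to normal subgroups. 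That the label of a lifted loop lies in $\overline H$ does depend, as you correctly flagged, on $\overline T$ being the label set of a $\pi_1(\overline R,B)$-generating system rather than an arbitrary lift; this is the point where your argument would break if one were careless, but you handle it correctly. The trade-off is that the paper keeps all four parts inside the same Theorem~\ref{thm:completions} framework, while your version of (3) is shorter, avoids completions entirely for that step, and records the useful fact $\pi(\overline H)=H$ explicitly (which the paper only establishes implicitly, one inclusion at a time, during its contradiction argument).
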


Recall that an induced cycle is a cycle which is equal to the subgraph induced by its vertices.  

\begin{proof}
	The claims regarding $W_\Delta$ being hyperbolic or $2$-dimensional follow from a theorem of Moussong \cite{Moussong} and by definition respectively.
	
	Let $R$ be the rose graph associated to $T$. Let $\overline R$ be the generalization of $R$, and let $\overline T$ be the set of labels of a finite set of generators of $\pi_1(\overline R, B)$, where $B$ is the base vertex. By definition, the generalization $\overline H$ is generated by $\overline T$. Let $R'$ be the rose graph associated to $\overline T$. It readily follows that $\overline R$ can be obtained from $R'$ by a sequence of fold operations. 
	In particular, a completion of $\overline R$ is a completion of $\overline H$. 
	
	Let $\Omega$ be a completion of $R$. By Lemma \ref{lem:completion}, $\overline \Omega$ is a completion of $\overline R$ and  is therefore a completion of $\overline H$. 
	As $\Omega$ is infinite if and only if $\overline \Omega$ is infinite, (1) immediately follows from Theorem \ref{thm:completions}(\ref{thm:completions_qc}). 
	
	By Theorem \ref{thm:completions}(\ref{thm:completions_torsion}), if $\overline H$ has torsion, then there is a loop in $\overline \Omega$ whose label is a 
	reduced word $w$ with letters in a clique of $\Delta$.  It follows from the definition of a $\G$-partite graph that $w$ uses at most one letter from each set in the decomposition of $\Delta$.  Applying the collapsing map, one obtains a loop in $\Omega$ with label a word $w'$ using letters in a clique of $\G$, with each letter being used at most once.  It follows that $w'$ is reduced. 
	Again by Theorem \ref{thm:completions}(\ref{thm:completions_torsion}), this implies that $H$ has a torsion element. Thus, (2) follows. 
	
	We define a map $\phi: V(\Delta) \to V(\Gamma)$ by, for each $1 \le i \le n$ and each $a \in A_i$, setting $\phi(a) = s_i$.
	Moreover, if two generators $s, t \in V(\Delta)$ of $W_\Delta$ commute, then it follows from the definition of a $\Gamma$-partite graph that $\phi(s)$ commutes with $\phi(t)$ in $W_\Gamma$. Thus, $\phi$ defines a homomorphism $\phi: W_\Delta \to W_\Gamma$ which is readily seen to be surjective.
	
	To see (3), suppose that $H$ is not normal in $W_\Gamma$. Let $h$ be a word representing an element of $H$ and $g$ be a word representing an element of $W_\Gamma$ so that 
	$ghg^{-1}$
	does not represent an element of $H$. 
	By Theorem \ref{thm:completions}(\ref{thm:completions_loops}) and as $\overline \Omega$ is a generalization of $\Omega$, there exists a word $\overline h$, representing an element of $\overline H$, so that $\phi(\overline h) = h$. As $\phi$ is surjective, there is a word $\overline g$ representing an element of $W_\Delta$ so that $\phi(\overline g) = g$. 
	Let $\overline w$ be a reduced word equal to $\overline g \overline h \overline g^{-1}$ in $W_\Delta$. Suppose, for a contradiction, that $\overline w$ represents an element of $\overline H$. By Theorem \ref{thm:completions}(\ref{thm:completions_loops}), we have that there is a loop in $\overline \Omega$ with label $\overline w$ which is based at the base vertex. By Lemma \ref{lem:completion}, there is a loop with label $\phi(\overline w)$ in $\Omega$ based at the base vertex. However, again by Theorem~\ref{thm:completions}(\ref{thm:completions_loops}), this implies that $\phi(\overline w) \in H$. This is a contradiction as $\phi(\overline w)$ is equal to $ghg^{-1}$ in $W_\Gamma$. Thus, $\overline g \overline h \overline g^{-1}$ does not represent an element of $\overline H$, and so $\overline H$ is not normal in $W_\Delta$.
	 
	Claim (4) follows from Theorem~\ref{thm:completions}(\ref{thm:completions_fi}).
\end{proof}

The next corollary follows directly from Theorem \ref{thm:functor} and Theorem \ref{thm:square_free}.

\begin{cor} \label{cor:nonquasiconvex}
	Let $H$ be a finitely generated
	non-quasiconvex subgroup of a (possibly non-hyperbolic) RACG $W_\Gamma$.
	 Then there exists an infinite 
	set $\{\Delta_i \mid i \in \mathbb N\}$
	 of $\Gamma$-partite graphs such that, for each $i$, $W_{\Delta_i}$ is $2$-dimensional, hyperbolic and contains the 
	 generalization $\overline H$ of $H$ (with respect to $\Delta_i$ 
	 and any finite generating set for $H$) 
	 as a non-quasiconvex subgroup.
\end{cor}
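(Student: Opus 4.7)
The plan is to directly combine Theorem~\ref{thm:square_free} with Theorem~\ref{thm:functor}, which together supply every piece of the conclusion. First, I would invoke Theorem~\ref{thm:square_free} to produce an infinite family $\{\Delta_i \mid i \in \mathbb N\}$ of $\Gamma$-partite graphs, each of which contains no simple $3$-cycles and no simple $4$-cycles. A small sanity check here: since each $\Delta_i$ has no $3$-cycles, any simple $4$-cycle in $\Delta_i$ would automatically be induced (a chord would create a $3$-cycle), so ``no simple $4$-cycles'' agrees with the ``no induced $4$-cycles'' hypothesis appearing in Theorem~\ref{thm:functor}.

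Next, by the last sentence of Theorem~\ref{thm:functor}, each $W_{\Delta_i}$ is hyperbolic (because $\Delta_i$ has no induced $4$-cycle) and $2$-dimensional (because $\Delta_i$ has no $3$-cycle). This gives the first two properties demanded of $W_{\Delta_i}$ in the statement of the corollary.

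Now fix any finite generating set $T$ for $H$, and for each $i$ form the generalization $\overline H < W_{\Delta_i}$ with respect to $\Delta_i$ and $T$ as defined before Theorem~\ref{thm:functor}. Since $H$ is non-quasiconvex in $W_\Gamma$ by hypothesis, part~(1) of Theorem~\ref{thm:functor} applied to each $\Delta_i$ gives at once that $\overline H$ is non-quasiconvex in $W_{\Delta_i}$. Since the family $\{\Delta_i\}$ is infinite, this yields the desired infinite collection of examples.

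There is essentially no obstacle in this argument: all the work is absorbed into the two cited theorems, and the only step requiring even minor care is reconciling the ``no simple cycle'' conclusion of Theorem~\ref{thm:square_free} with the ``no induced cycle'' hypothesis of Theorem~\ref{thm:functor}, which the observation in the first paragraph handles.
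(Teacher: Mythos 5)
Your argument is exactly the paper's: the corollary is stated there as following directly from Theorem~\ref{thm:square_free} and Theorem~\ref{thm:functor}, and you apply those two theorems in precisely the same way. The short aside reconciling ``no simple $4$-cycles'' with ``no induced $4$-cycles'' is a correct and sensible bit of bookkeeping, but the overall route is the same.
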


We finish this section by showing how Theorem \ref{thm:functor} gives easy examples of non-quasiconvex subgroups of hyperbolic groups. 

\begin{example} \label{ex:bipartite}
Let $\Gamma$ be the graph with vertex set $\{s_1, \dots, s_{2k} \}$, where $k \ge 2$, and with edges between $s_i$ and $s_j$ for all odd $i$ and even $j$, i.e., $\G$ is a complete bipartite graph.
	Let $w$ be the word $s_1s_2 \dots s_{2k}$, and let $H$ be the cyclic subgroup of $W_\Gamma$ generated by $w$. The subgroup $H$ can be seen to not be quasiconvex in $W_\Gamma$ by noting that $W_\Gamma = W_{ \{s_1, s_3, \dots, s_{2k-1} \}} \times W_{ \{s_2, s_4, \dots, s_{2k} \}}$ and that $w^n$ is equal, in $W_\Gamma$, to the word $(s_1s_3 \dots s_{2k-1})^n(s_2s_4 \dots s_{2k})^n$. Alternatively, one can
	conclude non-quasiconvexity
	 from Theorem~\ref{thm:completions}(\ref{thm:completions_qc}) by noting that $H$ has a completion which is an infinite cylinder that is tiled by squares 
	(see for instance Figures 2 and 6 of \cite{DL}). 
	
	By Theorem \ref{thm:square_free}, there exists a $\Gamma$-partite graph $\Delta$ such that $W_\Delta$ is $2$-dimensional and hyperbolic. Then, by Theorem \ref{thm:functor}, the generalization $\overline H$  with respect to $\Delta$ and $\{w\}$
	is a non-quasiconvex subgroup of $W_\Delta$. Additionally, as $H$ is not normal in $W_\Gamma$, 
	Theorem~\ref{thm:functor} implies that $\overline H$ is not normal in $W_\Delta$.
\end{example}

The following example gives non-quasiconvex subgroups with generators as short as possible, as discussed in the introduction.
\begin{example} \label{ex:2_gen}
	Let $\Gamma$ be a $4$-cycle with cyclically ordered vertices $\{s_1, \dots, s_4 \}$. Let $w = s_1 s_2$, $w' = s_3s_4$ and $T = \{w, w'\}$.  Note that $w$ and $w'$ are each of order  two in $W_\Gamma$. 
	The infinite dihedral subgroup $H < W_\Gamma$ generated by $T$ is 
	easily
	 seen to be non-quasiconvex in $W_\Gamma$. The rose graph associated to $T$ and the $1$-skeleton of a completion for it is shown in Figure \ref{fig:2_gen}.
	
	\begin{figure} 
		\begin{overpic}[scale=.5]{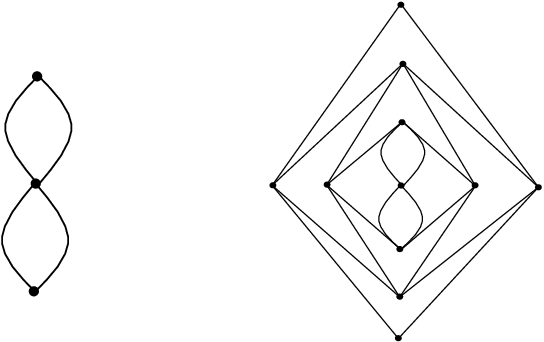}
		\put(-6,40){\small $s_1$}
		\put(14,40){\small $s_2$}
		\put(-6,20){\small $s_4$}
		\put(14,20){\small $s_3$}
		\put(40,29){\dots}
		\put(100,29){\dots}
		\end{overpic}
		\caption{On the left is the rose graph $R$ of Example \ref{ex:2_gen}, and on the right is the infinite $1$-skeleton of a completion $\Omega$ of $R$. The graph $\overline R$ 
		 (resp.~$\overline \Omega^1$)
		is obtained by replacing edges of $R$ 
		 (resp. $\Omega^1$) by generalized edges.}
		\label{fig:2_gen}
	\end{figure}
	
	By Theorem \ref{thm:square_free}, there exists a $\Gamma$-partite graph $\Delta$ with decomposition $A_1, \dots, A_4$ such that $W_\Delta$ is hyperbolic and $2$-dimensional (such as the one in Figure \ref{fig:partite}). Thus, by Theorem \ref{thm:functor}, the generalization $\overline H$ of $H$ is a non-quasiconvex subgroup of $W_\Delta$. Moreover, by definition $\overline H$ is generated by the labels of all simple loops in $\overline R$ based at the base vertex, where $R$ is the rose graph associated to $T$. Thus, $\overline H$ is generated by all words in $V(\Delta)$ of the from $aa'$ with either $\{a, a'\} \subset A_1 \cup A_2$ or $\{a, a'\} \subset A_3 \cup A_4$. In particular, all generators of $\overline H$ have length $2$.
\end{example}

Examples~\ref{ex:zn} and~\ref{ex:not_product} below provide additional examples of non-quasiconvex subgroups of RACGs, which can be used to produce non-quasiconvex subgroups of hyperbolic groups.

\begin{example} 
(Products)
\label{ex:zn}
	Given a join graph $\Gamma = \Gamma_1 \star \Gamma_2$, where
	$\Gamma_1$ and $\Gamma_2$ are not cliques, the RACG 
$W_\Gamma = W_{\Gamma_1} \times W_{\Gamma_2}$ 
	contains many non-quasiconvex subgroups. 
	Similarly, let $\Gamma_0$ be  a $4$-cycle, and for integers $i > 0$, let $\Gamma_i$ be the suspension of $\Gamma_{i-1}$. For $i \ge 0$, $W_{\Gamma_i}$ is virtually isomorphic to $\mathbb{Z}^{i+2}$ and also contains many non-quasiconvex subgroups.
\end{example}

	In the following example, $W_\Gamma$ is not a product. More complicated examples that are not products can be similarly constructed.
	\begin{example}\label{ex:not_product}
		Let $\Gamma$ be a graph consisting of a $6$-cycle with cyclically ordered vertices $s_1, \dots, s_6$ and an edge between $s_1$ and $s_4$. Let $T = \{w = s_1s_2s_3s_4, w'= s_1s_4s_5s_6 \}$, and let $H < W_\Gamma$ be generated by $T$. Let $S_1$ and $S_2$ be the $4$-cycles of $\Gamma$ induced by $ \{s_1, s_2, s_3, s_4\}$ and $\{s_1, s_4, s_5, s_6\}$ respectively. The group $W_\Gamma$ is the amalgamation of $W_{S_1}$ and $W_{S_2}$ over a finite group. As $\langle w \rangle$ is not quasiconvex in $W_{S_1}$, it readily follows that $H$ not quasiconvex in $W_\Gamma$.
		 This can also 
		  be seen using completions.
	\end{example}

\section{Non-quasiconvex free subgroups} \label{sec:nonpos}
In this section, we 
construct non-quasiconvex free subgroups 
with the additional property that they are not normal in any finite-index subgroup of the parent group 
(see Theorem~\ref{thm:free_subgroup}).  We also construct
non-locally quasiconvex hyperbolic groups with nonpositive sectional curvature (see Theorem~\ref{thm:non_pos}).

	The RACG $W_\Delta$ in the following
	theorem 
	can be chosen to be hyperbolic and $2$-dimensional by Theorem~\ref{thm:square_free}.
\begin{thm} \label{thm:free_subgroup}
	 As in Example \ref{ex:bipartite}, let 
$\Gamma =  \{s_1, s_3, \dots, s_{2k-1} \} \star \{s_2, s_4, \dots, s_{2k}\}$ 
	 be the complete bipartite graph with $k \ge 2$ and let $H < W_\Gamma$ be the cyclic subgroup generated by $s_1s_2\dots s_{2k}$. Let $\Delta$ be a $\Gamma$-partite graph with path connectors and with decomposition $A_1, \dots, A_{2k}$ satisfying $|A_i| \ge 2$ for each $i$.
	Then the generalization $\overline H$ is a finitely generated, non-quasiconvex, non-normal, 
	free subgroup of the one-ended group $W_\Delta$. Moreover, when $k > 2$, then $\overline H \cap K$ is not normal in $K$ for any finite index subgroup $K < W_\Delta$.
\end{thm}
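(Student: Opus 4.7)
The plan is to handle each listed property in sequence, delegating most to earlier results and focusing on freeness and non-virtual-normality. First, torsion-freeness, non-quasiconvexity in $W_\Delta$, and non-normality in $W_\Delta$ all follow immediately from Theorem~\ref{thm:functor}, since $H = \langle w \rangle \cong \mathbb{Z}$ is torsion-free, non-quasiconvex in $W_\Gamma$ by Example~\ref{ex:bipartite}, and non-normal in $W_\Gamma$ (projecting to $W_\Gamma = W_{\Gamma_1} \times W_{\Gamma_2}$ with $W_{\Gamma_l} = (\mathbb{Z}/2)^{*k}$, one sees that $s_1 w s_1$ is not a power of $w$). One-endedness of $W_\Delta$ follows from Lemma~\ref{lem:one_ended}, as $\Gamma = K_{k,k}$ for $k \ge 2$ is connected, not a clique, and not separated by any clique, making $W_\Gamma$ one-ended.

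For freeness of $\overline H$: since $\overline H$ is torsion-free, Theorem~\ref{thm:completions}(\ref{thm:completions_pi1}) gives $\pi_1(\overline\Omega) \cong \overline H$ with $\overline\Omega$ aspherical, where $\overline\Omega$ is the generalization of a completion $\Omega$ of $H$. Example~\ref{ex:bipartite} describes $\Omega$ as an infinite cylinder tiled by squares in layers. The plan is to exhibit $\overline \Omega$ as homotopy equivalent to a $1$-complex, which implies $\overline H$ is free. The key structural input is that each generalized square $\overline Q$ (arising from an $\Omega$-square $Q$ with labels $(s_i, s_j)$) contains $|A_i| + |A_j| - 1$ squares indexed by the edges of the tree $\Delta_{ij}$ (by the path connector hypothesis). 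I would set up a discrete Morse matching as follows: choose an orientation of $\Omega$ identifying a ``top/left'' pair of sides for each $Q$; within each $\overline Q$, root $\Delta_{ij}$ at any vertex and match each 2-cell $S_{Q,e}$ with the edge of $\overline Q$ corresponding to the child endpoint of $e$, placed on the top or left of $Q$ depending on whether the child lies in $A_i$ or $A_j$. Across adjacent $\overline Q, \overline{Q'}$ sharing an edge $f$, the top/left convention ensures $f$ is used by at most one of their matchings. Together with the tree-acyclicity of each local matching and the cylindrical structure of $\Omega$, the matching is globally acyclic and every 2-cell is paired, so $\overline \Omega$ is homotopy equivalent to a $1$-complex and $\overline H = \pi_1(\overline \Omega)$ is free.

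For the non-virtual-normality claim when $k > 2$: suppose for contradiction that $\overline H \cap K$ is normal in some finite-index $K \le W_\Delta$. Using the surjection $\phi \colon W_\Delta \twoheadrightarrow W_\Gamma$ from the proof of Theorem~\ref{thm:functor}, $\phi(K)$ is finite-index in $W_\Gamma$ (since $[W_\Gamma : \phi(K)] \le [W_\Delta : K]$), and $\phi(\overline H \cap K) \le \phi(\overline H) = H \cong \mathbb{Z}$ is finite-index in $H$, so equals $\langle w^m \rangle$ for some $m \ge 1$. Since $\phi(\overline H \cap K)$ is normal in $\phi(K)$, the subgroup $\langle w^m \rangle$ is virtually normal in $W_\Gamma$. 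Using $W_\Gamma = G_1 \times G_2$ with $G_l = (\mathbb{Z}/2)^{*k}$ (each non-elementary virtually free for $k > 2$), and writing $w = (u,v)$ with $u = s_1 s_3 \cdots s_{2k-1}$ and $v = s_2 s_4 \cdots s_{2k}$, the centralizer $C_{W_\Gamma}(w^m) = C_{G_1}(u^m) \times C_{G_2}(v^m)$ is virtually $\mathbb{Z}^2$ (since centralizers of infinite-order elements in non-elementary virtually free groups are virtually cyclic). Passing to the kernel of the action of the normalizer on $\langle w^m \rangle \cong \mathbb{Z}$ (an index-at-most-two subgroup), we obtain a finite-index subgroup of $W_\Gamma$ contained in this virtually $\mathbb{Z}^2$ subgroup, contradicting that $W_\Gamma$ contains non-abelian free subgroups when $k > 2$.

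The hardest part is verifying the Morse matching is globally acyclic on the infinite complex $\overline \Omega$; the acyclicity should follow from the infinite cylindrical structure of $\Omega$ together with the tree-acyclicity internal to each $\overline Q$, but careful bookkeeping is required.
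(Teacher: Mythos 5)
Your handling of torsion-freeness, non-quasiconvexity, non-normality in $W_\Delta$, and one-endedness matches the paper. Your argument that $\overline H$ is not virtually normal for $k>2$ is correct but takes a genuinely different route: you push forward along $\phi\colon W_\Delta \twoheadrightarrow W_\Gamma$ and use that a virtually normal infinite cyclic subgroup would force $W_\Gamma$ to be virtually $\mathbb Z^2$ (via centralizers in the virtually free factors $(\mathbb Z/2)^{\ast k}$), which fails when $k>2$. The paper instead exhibits explicit elements: it picks distinct $a,a'\in A_1$ and $b,b'\in A_3$, uses Tits' solution to the word problem to show $(aa')^p$ and $(bb')^q(aa')^p(b'b)^q$ are reduced, and shows via Theorem~\ref{thm:completions}(\ref{thm:completions_loops}) and the structure of $\overline\Omega$ that the first lies in $\overline H$ while the second does not. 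Your approach is a valid algebraic shortcut; the paper's is more elementary and more in the spirit of its completion machinery.

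The genuine gap is in the freeness argument, and it is the one you flag yourself: the discrete Morse matching is sketched but the acyclicity is not verified, and the "top/left" convention is precisely the place where the cylindrical topology bites. Because $\Omega$ is $\mathbb R\times S^1$, a gradient path that keeps exiting through ``left'' edges traverses squares around the $S^1$ factor and returns to its starting generalized square after finitely many steps; ruling out a closed gradient cycle there requires showing the return is forced strictly downward in the local tree ordering, which depends on unstated choices of rootings in each $\Delta_{ij}$ and on how the $A_i$/$A_{i+1}$ generalized edges sit (top/bottom vs.\ left/right) as the labels shift around the cycle. None of this is checked, and a poor choice of rootings could plausibly produce a cyclic gradient path. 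The paper sidesteps this entirely: it realizes $\overline\Omega$ as a nested union $\Sigma_0=\overline R\subset\Sigma_1\subset\cdots$ where each $\Sigma_{j+1}$ is obtained by gluing a generalized square $\Omega_i$ to $\Sigma_j$ along $R_i$ (the union of two generalized edges), and observes via \cite{DL-visual_raags} that $\pi_1(R_i)\to\pi_1(\Omega_i)$ is an isomorphism, so each van Kampen pushout leaves $\pi_1$ unchanged and $\pi_1(\overline\Omega)\cong\pi_1(\overline R)$ is free. If you want to salvage the Morse approach, you would need to either orient the matching so that all matched edges point in the $\mathbb R$-direction only (which may not be possible since both labels alternate around the cylinder), or verify the ``spiral'' descent explicitly; as written the argument is incomplete.
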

\begin{proof}
Note that $W_\G$ is one-ended by Lemma \ref{lem:one_ended}.
Let $R$ denote a $2k$-cycle with label $s_1s_2\dots s_{2k}$.  Recall that by definition, $\overline H$ is the subgroup of $W_\Delta$ generated by the labels of a set of generators of $\pi_1(\overline R, B)$, where $B$ is the base vertex. Note also that $\pi_1(\overline R, B)$ is free of some rank $l$.

There exists a  completion $\Omega$ of $R$ which consists of a tiling of $\mathbb R \times S^1$ by squares 
(see Figures~2 and~6 of~\cite{DL}).
 By Lemma~\ref{lem:completion}, the generalization $\overline \Omega$ of $\Omega$ is a completion for $\overline H$.  It readily follows from the description of $\Omega$ and Theorems~\ref{thm:functor} and~\ref{thm:completions} that $\overline H$ is non-quasiconvex, non-normal and torsion-free.  
Theorem~\ref{thm:completions}(\ref{thm:completions_pi1}) then implies that $\overline H \cong \pi_1(\overline \Omega) $.  Thus, we show that $\overline H$ is free by showing that 
$\pi_1(\overline \Omega)$ is a free group of rank $l$.

We first introduce some new complexes.  
For $1 \le i \le 2k$, let $R_i$ denote the subgraph of $\overline R$ consisting of the generalized edges with labels $A_i$ and $A_{i+1}$ (taken modulo $2k$), and let $v_i$ denote the vertex incident to both these generalized edges.  Choose one edge in each of these two generalized edges to form a maximal tree, leading to a choice of basis loops for $\pi_1(R_i)$.  Let $H_i$ be the subgroup of $W_\Delta$ generated by the labels of these loops,
which are all infinite order elements of length two.
By Theorem 3.17 from~\cite{DL-visual_raags}, $H_i$ is a free group with this generating set as a free basis.
Let $\Omega_i$ denote the generalization of a single $2$-cube with boundary label $s_i s_{i+1} s_i s_{i+1}$.  It can be seen that $\Omega_i$ is a completion for $H_i$. 
Moreover, since $H_i$ is free, it is torsion-free, and so $\pi_1(\Omega_i) \cong H_i  \cong \pi_1(R_i)$.

It is evident from the description of $\Omega$ and the definition of a generalization of a complex, that $\overline \Omega$ is a direct limit of a sequence of complexes $\Sigma_i$, where $\Sigma_0 = \overline R$ and $\Sigma_{j+1}$ is obtained from $\Sigma_j$ by gluing a copy of $ \Omega_i$ for some $i$ to $\Sigma_{j}$ along a copy of $ R_i$ in each of these.  

Now for $j\ge0$, van Kampen's theorem says that $\pi_1(\Sigma_{j+1})$ is the pushout of the diagram 
$\pi_1(\Sigma_j) \leftarrow \pi_1(R_i) \rightarrow \pi_1(\Omega_i)$.   
From what we proved above, the 
map $\pi_1(R_i) \rightarrow \pi_1(\Omega_i)$ is an isomorphism, and so its inverse provides a natural map 
$\pi_1(\Omega_i) \to \pi_1(\Sigma_j)$.  By the universal property of pushouts, this map, together with the identity map of $\pi_1(\Sigma_j)$ induces a unique map $\phi:\pi_1(\Sigma_{j+1}) \to \pi_1(\Sigma_{j})$.  Moreover, the natural map $\pi_1(\Sigma_{j}) \to \pi_1(\Sigma_{j+1})$ is an inverse for $\phi$.  Thus, $\pi_1(\Sigma_{j}) \cong \pi_1(\Sigma_{j+1})$.  This, together with the fact that $\pi_1(\Sigma_0)$ is a free group of rank $l$ completes the proof that $\overline H$ is free of rank $l$. 

To prove the claim regarding normality,
choose distinct $a, a' \in A_1$ and $b, b' \in A_3$. 
Recall that there are no edges between two vertices of $A_i$, and as $s_1$ is not adjacent to $s_3$ in $\Gamma$,
there are no edges between $\{a, a'\}$ and $\{b, b'\}$.
It follows by Tits' solution to the word problem (see \cite[Theorem 3.4.2]{Davis}) that for every $p, q \in \mathbb{Z} \setminus \{0\}$, 
the words
$(aa')^p$ and $(bb')^q(aa')^p(b'b)^q$ are reduced. 
In $\Omega$, the base vertex is contained in four distinct edges, labeled by $s_1$, $s_2$, $s_{2k}$ and $s_{2k-1}$. Thus, by Theorem~\ref{thm:completions}(\ref{thm:completions_loops}) and the structure of $\overline \Omega$, we have that, for every $p, q \in \mathbb{Z} \setminus \{0\}$, 
the word
$(aa')^p$ represents an element of $\overline H$ and $(bb')^q(aa')^p(b'b)^q$ does not represent an element of $\overline H$. Now, given any finite index subgroup $K < W_\Delta$, there exists some $p, q \in \mathbb{Z} \setminus \{0\}$ such that $(bb')^q \in K$ and $(aa')^p \in K \cap \overline H$. The claim follows.
\end{proof}

The next lemma constructs $\Gamma$-partite graphs with nonpositive sectional curvature
 (see Section~\ref{sec:background_npsc} for the definition).
These graphs are used in Theorem~\ref{thm:non_pos} below. 

\begin{lemma} \label{lem:sec_curv}
	Let $\Gamma$ be a $4$-cycle, and let $\Delta$ be a $\Gamma$-partite graph with path connectors. Then $\Delta$ has nonpositive sectional curvature.
\end{lemma}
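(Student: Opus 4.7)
The plan is to show directly that every connected, spurless subgraph $\Theta \subseteq \Delta$ with at least one edge satisfies $|E(\Theta)| \le 2|V(\Theta)| - 4$, which is equivalent to $\kappa(\Theta) \le 0$. Write the vertices of $\Gamma$ cyclically as $s_1, s_2, s_3, s_4$, so the edges of $\Gamma$ are indexed by $\{12, 23, 34, 14\}$ and, by the path-connector hypothesis, each corresponding $\Delta_{ij}$ is a simple path. For each such $ij$, let $\Theta_{ij}$ be the subgraph of $\Theta$ with vertex set $V(\Theta) \cap (A_i \cup A_j)$ and edge set $E(\Theta) \cap E(\Delta_{ij})$, and set $\Theta_i := V(\Theta) \cap A_i$.

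First I would establish two counting identities and a local bound. Since the edge sets of distinct $\Delta_{ij}$ are pairwise disjoint and exhaust $E(\Delta)$, we have $|E(\Theta)| = \sum_{ij} |E(\Theta_{ij})|$. Because every vertex of $\Delta$ lies in a unique $A_i$ and because $s_i$ is an endpoint of exactly two edges of the $4$-cycle $\Gamma$, each vertex of $\Theta$ lies in exactly two of the sets $V(\Theta_{ij})$; hence $\sum_{ij} |V(\Theta_{ij})| = 2|V(\Theta)|$. Finally, since each $\Delta_{ij}$ is a path (in particular, a tree), $\Theta_{ij}$ is a forest, so whenever $V(\Theta_{ij}) \neq \emptyset$ we have the bound $|E(\Theta_{ij})| \le |V(\Theta_{ij})| - 1$.

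The proof then splits into two cases. If all four of the $V(\Theta_{ij})$ are nonempty, summing the local bound over the four edges of $\Gamma$ and applying the identities yields $|E(\Theta)| \le \sum_{ij}(|V(\Theta_{ij})|-1) = 2|V(\Theta)| - 4$, as desired. Otherwise, up to the cyclic symmetry of $\Gamma$, some $V(\Theta_{ij}) = \emptyset$, say $V(\Theta_{12}) = \emptyset$; then $\Theta_1 = \Theta_2 = \emptyset$, so $\Theta \subseteq A_3 \cup A_4$. The edges of $\Theta$ cannot lie in $\Delta_{13}$ or $\Delta_{24}$ (edgeless by the definition of $\Gamma$-partite) nor in $\Delta_{14}$ or $\Delta_{23}$ (whose other endpoint sets meet $\Theta$ in $\emptyset$), so every edge of $\Theta$ lies in the path $\Delta_{34}$. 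Therefore $\Theta$ is a connected subgraph of a path containing an edge, hence a subpath with at least two vertices, whose two endpoints have valence one in $\Theta$ and thus form spurs, contradicting the spurless hypothesis.

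I do not anticipate any real obstacle: the argument is essentially a double-counting on the bipartite/forest structure that path connectors impose, combined with the observation that the $4$-cycle $\Gamma$ arranges the four paths $\Delta_{ij}$ so that every vertex of $\Delta$ belongs to exactly two of them. The only point requiring care is handling the failure of the bound $|E(\Theta_{ij})| \le |V(\Theta_{ij})|-1$ when $V(\Theta_{ij})$ is empty, which is precisely where the spurless and connectedness hypotheses kick in to force $\Theta$ to sit inside a single path and be ruled out.
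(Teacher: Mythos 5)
Your proof is correct, and it takes a genuinely cleaner route than the paper's. Where the paper bounds the edges between $A_i$ and $A_{i+1}$ by $2\min(V_i,V_{i+1})$ (or by $2V_i-1$ when $V_i=V_{i+1}$) and then runs a delicate optimization over the quantities $V_i$ involving $\epsilon_i$, $M=\max V_i$ and $m=\min V_i$, you instead observe that each $\Theta_{ij}$ is a subforest of the path $\Delta_{ij}$, hence satisfies the uniform bound $|E(\Theta_{ij})| \le |V(\Theta_{ij})|-1$ whenever nonempty, and that each vertex of $\Theta$ contributes to exactly two of the four sets $V(\Theta_{ij})$ because each vertex of the $4$-cycle $\Gamma$ has degree two. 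Summing the forest bound over the four edges of $\Gamma$ and applying the double-count $\sum_{ij}|V(\Theta_{ij})|=2|V(\Theta)|$ gives $|E(\Theta)| \le 2|V(\Theta)|-4$ at once, which is precisely $\kappa(\Theta)\le 0$. Your treatment of the degenerate case is also more unified: if some $V(\Theta_{ij})=\emptyset$ then $\Theta$ lives inside a single path $\Delta_{kl}$ and so has a spur; and if no adjacent pair of parts is simultaneously missed by $\Theta$, then all four $V(\Theta_{ij})$ are automatically nonempty and the main bound applies — so the separate cases the paper considers (one $V_i=0$; two opposite $V_i=0$) never need to be distinguished. Your double-counting identity makes transparent exactly how the cyclic structure of $\Gamma$ drives the inequality, which the paper's case analysis obscures.
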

\begin{proof}
	Let $s_1, \dots, s_4$ be the labels of a cyclic ordering of the vertices of $\Gamma$, and let $A_1, \dots, A_4$ be the corresponding decomposition of $\Delta$.
	Let $\Theta$ be a connected, spurless subgraph of $\Delta$ which contains at least one edge. Let $V_i$ be the number of vertices of $\Theta$ contained in $A_i$, and let $E_i$ be the number of edges of $\Theta$ connecting a vertex of $A_i$ to a vertex of $A_{i+1}$ (taken modulo 4). We need to check that 
	$\kappa(\Theta)= 2 - \sum_{i=1}^4 V_i + \frac{1}{2}\sum_{i=1}^{4}E_i$ is at most $0$.
	
	As the edges in $\Delta$ between $A_i$ and $A_{i+1}$ form a 
	simple 
	path, we have that $E_i \le 2 V_i - 1$ whenever $V_i =  V_{i+1} \neq 0$ and  $E_i \le 2 \min(V_i, V_{i+1})$ whenever $V_i \neq V_{i+1}$. 
	
	Suppose first that $V_i \neq 0$ for all $i$. We then get:
	\[\kappa(\Theta) \le 2 - \sum_{i=1}^4 V_i + \sum_{i=1}^4 \epsilon_i (V_i - \frac{1}{2}) + \sum_{i=1}^4(1 - \epsilon_i)  \min(V_i, V_{i+1}) \]
	where $\epsilon_i = 1$ if  
	$V_i = V_{i+1}$ 
	and $\epsilon_i = 0$ otherwise. Regrouping terms and noting that $\epsilon_i (V_i - \min(V_i, V_{i+1})) = 0$ for all $i$, we get: 
	\begin{align*}
		\kappa(\Theta) &\le 2 - \sum_{i=1}^4 V_i - \frac{1}{2} \sum_{i=1}^4 \epsilon_i + \sum_{i=1}^4 \min(V_i, V_{i+1}) + \sum_{i=1}^4 \epsilon_i (V_i - \min(V_i, V_{i+1}))  \\
		&= 2  -A-B, 
	\end{align*}
	where
	$$A := \sum_{i=1}^4 V_i -\sum_{i=1}^4 \min(V_i, V_{i+1}) \text{ and } B := \sum_{i=1}^4\frac{\epsilon_i}{2}.$$  
		Note that $A \ge 0$, as $V_i - \min(V_i, V_{i \pm 1}) \ge 0$ for any $i$.
	We prove below that $A+B \ge 2$.  From this, it follows that $\kappa(\Theta) \le 2-A - B \le 0$.
	
	Let $M$ and $m$ respectively be the maximum and minimum of $\{V_i \,| \,1 \le i \le 4\}$.  If $M=m$, then we necessarily have that $\epsilon_i = 1$ for all $i$.  It follows that 
	$B=2$.  Thus, $A+B \ge 2$, since $A\ge 0$.
	Now suppose $M > m$.  
	Without loss of generality, assume that $V_1=m$. Then 
	$$A = m + V_2+V_3+V_4  - (m +\min(V_2, V_{3})  + \min (V_3, V_{4}) +m).$$
	We have $V_i =M$ for some $i \in \{2, 3, 4\}$.  In each of these cases, one readily verifies that $A \ge M-m$, (using the observation that $V_i - \min(V_i, V_{i \pm 1}) \ge 0$ for any $i$).   From this, it follows that if $M-m\ge 2$, then $A+B \ge M-m +0 \ge 2$.   
	Finally, suppose $M-m =1$, so that $A \ge 1$.  
	Observe that if $\epsilon_i = 1$ for some $i$, then there exists $j \neq i$ such that $\epsilon_j = 1$.  In this case, $B \ge 1$, and once again $A + B \ge 2$.  In the last remaining case, namely when $M-m=1$ and $\epsilon_i=0$ for all $i$, we necessarily have $V_2=V_4=m+1$ and $V_3=m$, and it is easily verified that $A \ge 2$.  Therefore,  $A+B \ge 2$.
	We have thus verified that the sectional curvature is at most $0$ when $V_i \neq 0$ for all $i$.
	
	If  $V_i = V_j = 0$ for some $i \neq j$, then $\Theta$ necessarily contains a spur. This follows as $\Theta$ is connected, contains at least one edge and the edges in $\Delta$ between $A_i$ and $A_{i+1}$ form a simple path. Thus, it cannot be that $V_i = V_j = 0$ for some $i \neq j$.
	
	The only remaining case to check, up to relabeling, is 
		if
	 $V_4 = 0$ and $V_1, V_2$ and $V_3$ are non-zero.
	 Suppose this is the case. Each vertex of $A_1$ (respectively of $A_3$), is adjacent to at most two vertices of $A_2$. As each edge of $\Theta$ is incident to a vertex of $A_1 \cup A_3$, we see that the total number of edges of $\Theta$ must be at most $2(V_1 + V_3)$. We thus get:
	\[ \kappa(\Theta) \le 2 - \sum_{i=1}^3V_i + (V_1 + V_3)\]
	If $V_2 > 1$, this implies that $\kappa(\Theta) \le 0$ and we are done. On the other hand, it cannot be that $V_2 = 1$, for then 
every 
	 vertex of $A_1 \cap \Theta$
	  must be a spur. This establishes the claim.
\end{proof}

By applying 
Theorem~\ref{thm:free_subgroup}
 and 
Lemma~\ref{lem:sec_curv}, we now
obtain explicit examples of non-locally quasiconvex, hyperbolic groups 
with nonpositive sectional curvature.

\begin{thm} \label{thm:non_pos}
	Let $\Gamma$ be a $4$-cycle, and let $\Delta$ be a $\Gamma$-partite graph with path connectors, no $3$-cycles and no $4$-cycles. Then $W_\Delta$ has a finite-index, torsion-free, hyperbolic subgroup $K$ that is the fundamental group of a right-angled square complex with nonpositive sectional curvature. Moreover, $K$ contains a finitely generated, non-quasiconvex, free subgroup.	In particular, $K$ is not locally quasiconvex.
\end{thm}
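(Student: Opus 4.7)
The plan is to take $K$ to be the commutator subgroup $[W_\Delta, W_\Delta]$, and to construct the non-quasiconvex free subgroup of $K$ by intersecting $K$ with the free subgroup produced by Theorem~\ref{thm:free_subgroup}.

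First, I would verify the claimed properties of $K$. Since $\Delta$ has no $3$-cycles and no $4$-cycles, Theorem~\ref{thm:functor} gives that $W_\Delta$ is hyperbolic and $2$-dimensional, so its finite-index subgroup $K$ is also hyperbolic. Lemma~\ref{lem:sec_curv} shows that $\Delta$ has nonpositive sectional curvature, and the fact recalled at the end of Section~\ref{sec:background_npsc} then yields that $K$ has finite index in $W_\Delta$ and is the fundamental group of a right-angled square complex of nonpositive sectional curvature. For torsion-freeness, I would invoke the standard fact that every torsion element of $W_\Delta$ is conjugate to a non-trivial element of a finite standard parabolic subgroup, which has the form $(\mathbb{Z}/2)^m$ for some clique of $\Delta$; every such element has non-trivial image under the abelianization map $W_\Delta \to (\mathbb{Z}/2)^{V(\Delta)}$, and since this image is invariant under conjugation, no non-trivial torsion element lies in the kernel $K$.

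For the final claim, observe that $\Gamma$, being a $4$-cycle, is exactly the $k=2$ case of the complete bipartite graph in Theorem~\ref{thm:free_subgroup}. Applying that theorem (and assuming, as one may, that $\Delta$ also satisfies $|A_i| \ge 2$) produces a finitely generated, non-quasiconvex, free subgroup $\overline H < W_\Delta$. Set $F := \overline H \cap K$. Since $[W_\Delta : K] < \infty$, $F$ has finite index in $\overline H$, hence is finitely generated, free (as a subgroup of a free group), and sits within bounded Hausdorff distance of $\overline H$ in the Cayley graph of $W_\Delta$. This forces $F$ to be non-quasiconvex in $W_\Delta$. As $K$ is quasi-isometrically embedded in $W_\Delta$ via inclusion, quasiconvexity of $F$ in $K$ coincides with quasiconvexity in $W_\Delta$, so $F$ is the desired non-quasiconvex free subgroup of $K$, and consequently $K$ is not locally quasiconvex. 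The main point requiring care is the torsion-freeness of $K$; the remaining steps mostly assemble results already established in the excerpt.
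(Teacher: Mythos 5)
Your proof follows the paper's exactly: take $K = [W_\Delta, W_\Delta]$, use Lemma~\ref{lem:sec_curv} for the nonpositive sectional curvature of $\Delta$, and intersect $K$ with the non-quasiconvex free subgroup $\overline H$ from Theorem~\ref{thm:free_subgroup}. Your explicit check that $K$ is torsion-free (via finite parabolics and the abelianization) is a welcome supplement, as the paper leaves this implicit. One step needs tightening: since $\Delta$ is \emph{given} in the statement rather than chosen, you cannot simply ``assume'' $|A_i| \ge 2$; you should instead observe that this hypothesis of Theorem~\ref{thm:free_subgroup} is forced by the hypotheses of Theorem~\ref{thm:non_pos}. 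Indeed, if some $A_i$ were a singleton $\{a\}$, then since $\Delta_{i,i+1}$ is an alternating simple path containing all of $A_{i+1}$ we get $|A_{i+1}| \le 2$; and $|A_{i+1}| = 2$ would force its two vertices $b_1, b_2$ (both adjacent to $a$) to also share a neighbor in $A_{i+2}$ on the alternating path $\Delta_{i+1,i+2}$, producing a simple $4$-cycle. So $|A_{i+1}| = 1$, and propagating around $\Gamma$ shows every $A_j$ is a singleton, making $\Delta$ itself a $4$-cycle --- contradicting the no-$4$-cycle hypothesis.
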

\begin{proof}
	Let $K$ be the commutator subgroup of $W_\Delta$. Then $K$ is hyperbolic, as it is a finite-index subgroup of the hyperbolic group $W_\Delta$. Morever, $K$ is the fundamental group of a right-angled square complex with nonpositive sectional curvature, as $\Delta$ has nonpositive sectional curvature by Lemma \ref{lem:sec_curv}.
	Let $\overline H$ be the finitely generated, non-quasiconvex free subgroup of $W_\Delta$ given by Theorem \ref{thm:free_subgroup}. 
	Then
	$\overline H \cap K$ is a finitely generated, non-quasiconvex, free subgroup of~$K$.
\end{proof}

\section{Non-finitely presentable, non-quasiconvex subgroups} \label{sec:not_free}

In this section we show how $\Gamma$-partite graphs with cycle connectors provide examples of finitely generated, non-quasiconvex subgroups of hyperbolic RACGs which contain closed surface subgroups (Theorem \ref{thm:surface_subgroups}). We also give, using our method, a construction of finitely generated, non-finitely presentable subgroups of hyperbolic RACGs (Theorem \ref{thm:not_fp}).

\begin{lemma}\label{lem:surface_subgroups}
	Let $C$ be a cycle with cyclically-ordered vertices $p_1, p_2, \dots, p_{2k}$, where
	 $k \ge 3$. 
	Let 
	\[A = \{p_1p_i ~|~ i \text{ is odd}, i > 1\}  \text{ and } B = \{p_2p_i ~|~ i \text{ is even}, i > 2\} \]
	Then the subgroup $K < W_C$ generated by $A \cup B$
	is isomorphic to the fundamental group of a closed hyperbolic surface.
\end{lemma}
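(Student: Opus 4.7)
The plan is to construct an explicit completion $\Omega$ of $K$ with respect to the generating set $T = A \cup B$, show $\Omega$ is a closed $2$-manifold with $\chi(\Omega) < 0$, and apply Theorem~\ref{thm:completions}(\ref{thm:completions_pi1}) to conclude $K \cong \pi_1(\Omega)$ is a closed hyperbolic surface group.

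Starting from the rose graph $R$ based at $B$, I first fold all $p_1$-labeled edges at $B$ into a single edge going to a new vertex $v_1$, and similarly fold all $p_2$-labeled edges to produce a single edge from $B$ to a new vertex $v_2$. After these folds, $B$ has exactly one edge for each label $p_1, \ldots, p_{2k}$ (odd-indexed going to $v_1$, even-indexed to $v_2$), while $v_1$ carries the $k$ odd-labeled edges back to $B$ and $v_2$ carries the $k$ even-labeled edges back to $B$. Next, for each edge $(p_i, p_{i+1})$ of $C$ (indices mod $2k$), I attach a square $S_{i,i+1}$ at $B$ with boundary $p_i p_{i+1} p_i p_{i+1}$, introducing a new vertex $w_{i,i+1}$; this creates one new edge at $v_1$ (labeled by the even index in $\{i, i+1\}$) and one at $v_2$ (labeled by the odd index). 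At $v_1$, the two new $p_j$-edges produced by $S_{j-1,j}$ and $S_{j,j+1}$ fold together and identify $w_{j-1,j}$ with $w_{j,j+1}$ for each even $j$; the analogous folds at $v_2$ chain together all these identifications, collapsing every $w_{i,i+1}$ to a single vertex $w$.

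The resulting complex $\Omega$ has four vertices $\{B, v_1, v_2, w\}$, $4k$ edges, and $2k$ squares, and a direct check confirms $\Omega$ is folded and cube-full (for each vertex and each edge of $C$ the relevant pair of edges already appears in some $S_{i,i+1}$, and edges at each vertex have distinct labels). The main step is to verify $\Omega$ is a closed surface: every edge labeled $p_j$ is contained in exactly the two squares $S_{j-1,j}$ and $S_{j,j+1}$, and the link of every vertex has vertex set $\{p_1, \ldots, p_{2k}\}$ with the squares $S_{i,i+1}$ contributing exactly the link edges $(p_i, p_{i+1})$, so each link is a $2k$-cycle. Hence $\Omega$ is a closed $2$-manifold with $\chi(\Omega) = 4 - 4k + 2k = 4 - 2k \le -2$, and therefore supports a hyperbolic structure.

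Finally, to apply Theorem~\ref{thm:completions}(\ref{thm:completions_pi1}) I need $K$ torsion-free, which by Theorem~\ref{thm:completions}(\ref{thm:completions_torsion}) amounts to ruling out loops in $\Omega$ with label a reduced word in a clique of $C$. Since cliques of $C$ are edges $\{p_i, p_{i+1}\}$, the possible reduced words are $p_i$, $p_{i+1}$, or $p_i p_{i+1}$; inspection shows no edge of $\Omega$ is a loop and that travelling along $p_i$ then $p_{i+1}$ from any of the four vertices lands at a different vertex. Thus $K \cong \pi_1(\Omega)$ is the fundamental group of a closed hyperbolic surface. The main obstacle is the parity-sensitive bookkeeping required to verify the four-vertex, $2k$-cycle-link structure at every vertex.
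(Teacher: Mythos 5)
Your proof is correct, but it takes a genuinely different route from the paper's. The paper's proof is a clean reduction to the machinery it has already built: it observes that $C$ is $\Gamma$-partite over the two-vertex graph $\Gamma$ with a single edge (so $W_\Gamma \cong \mathbb{Z}/2 \times \mathbb{Z}/2$), takes $H$ to be the trivial subgroup of $W_\Gamma$ generated by $T = \{t_1 t_1, t_2 t_2\}$, and identifies $K$ with the generalization $\overline H$ with respect to $C$ and $T$. Theorem~\ref{thm:functor} then immediately gives that $K$ is torsion-free and of finite index in $W_C$, and since $W_C$ is a hyperbolic reflection group acting geometrically on $\mathbb{H}^2$, this forces $K$ to be a closed hyperbolic surface group. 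You instead build the completion $\Omega$ of $K$ by hand --- folding the rose, attaching the $2k$ squares $S_{i,i+1}$, and folding again --- and verify directly that the result is a closed surface of Euler characteristic $4-2k < 0$ with $\pi_1(\Omega) \cong K$. The two completions are actually the same object: your $\Omega$ is precisely the generalization (with respect to $C$) of the one-square completion of the trivial subgroup of $\mathbb{Z}/2 \times \mathbb{Z}/2$, so in effect you have unwound Lemma~\ref{lem:completion} by hand in this special case. Your argument is more self-contained and does not require the reader to have absorbed the $\Gamma$-partite/generalization framework, at the cost of the parity bookkeeping you note; the paper's argument is shorter and fits the rest of the paper's structure. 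One minor point worth making explicit in your write-up: after verifying $\chi(\Omega) = 4-2k \le -2$, you should say a word about why this gives a \emph{hyperbolic} surface regardless of orientability (any closed surface with negative Euler characteristic, orientable or not, admits a hyperbolic structure), since your link computation does not determine orientability.
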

\begin{proof}
The group $W_C$ is well-known to act geometrically by reflections on the hyperbolic plane (see Theorem 6.4.3 and Example 6.5.3 of~\cite{Davis} for instance). 
Let $\Gamma$ be a graph with exactly two vertices, $t_1$ and $t_2$, and with one edge between them.
Note that $W_\Gamma$ is a finite group.
Also, note that $C$ is a $\Gamma$-partite graph with decomposition $A_1 = \{s_1, s_3, \dots, s_{2k-1}\}$ and $A_2 = \{s_2, s_4, \dots, s_{2k}\}$. Let $H$ be the (trivial) subgroup of $W_\Gamma$ generated by the set of words $T = \{t_1t_1, t_2t_2\}$. It follows that $K$ is the generalization of $H$ with respect to $C$ and $T$. As $H$ is a torsion-free, finite-index subgroup of $W_\Gamma$, it follows by Theorem~\ref{thm:functor} that $K$ is a torsion-free and finite-index subgroup of $W_C$.
 	Thus, $K$ is isomorphic to the fundamental group of a closed hyperbolic surface.
\end{proof}

\begin{thm} \label{thm:surface_subgroups}
	Let $\Gamma$ be a simplicial graph, $\Delta$ be a $\Gamma$-partite graph with cycle connectors, and 
	 $H$ be a non-quasiconvex subgroup of $W_\Gamma$. Then $\overline H$ is not quasiconvex in $W_\Delta$ and contains a closed surface subgroup.	
\end{thm}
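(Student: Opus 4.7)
The first claim, that $\overline H$ is not quasiconvex in $W_\Delta$, is immediate from Theorem~\ref{thm:functor}(1). For the closed surface subgroup claim, the plan is to locate a $2$-cube in a completion of $H$, observe that its generalization is a closed $2$-manifold sitting inside a completion of $\overline H$, identify its fundamental group with a closed surface subgroup $K$ of $W_\Delta$ via Lemma~\ref{lem:surface_subgroups}, and then embed $K$ into $\overline H$ using Theorem~\ref{thm:completions}(\ref{thm:completions_loops}).

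Let $\Omega$ be a completion of $H$ with respect to some finite generating set.  By Theorem~\ref{thm:completions}(\ref{thm:completions_qc}), $\Omega$ is infinite.  I would first show that $\Omega$ must contain a $2$-cube.  Indeed, if $\Omega$ had no $2$-cubes, then no cube attachment operation occurred in the construction of $\Omega$ from its finite rose graph (and no cube identification operation is possible without $2$-cubes), so $\Omega$ was obtained by folds alone; but folds preserve finiteness, contradicting the infinitude of $\Omega$.  Let $c$ be a $2$-cube in $\Omega$ with boundary label $s_is_j$ for some edge $(s_i, s_j) \in E(\Gamma)$, and consider the subcomplex $\overline c \subseteq \overline \Omega$, where $\overline \Omega$ is a completion of $\overline H$ by Lemma~\ref{lem:completion}.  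By construction, $\overline c$ has four vertices, generalized edges labeled by $A_i$ and $A_j$, and one $2$-cube for each edge of $\Delta_{ij}$; a direct computation shows that each vertex link in $\overline c$ is isomorphic to $\Delta_{ij}$, which is a cycle by the cycle connectors hypothesis.  Hence $\overline c$ is a closed $2$-manifold.

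The subcomplex $\overline c$, with a chosen base vertex, is isomorphic to the generalization (with respect to $\Delta_{ij}$) of a single $2$-cube labeled $s_is_j$, which is the completion of the trivial subgroup of $W_{\{s_i, s_j\}}$ with respect to $T' = \{s_is_i, s_js_j\}$.  This is precisely the setup of Lemma~\ref{lem:surface_subgroups}, which identifies the image $K$ of the label map $\pi_1(\overline c, v) \to W_\Delta$ as a closed surface subgroup of $W_{\Delta_{ij}}$ (the case where $\Delta_{ij}$ is a $4$-cycle is handled by an analogous direct argument giving a torus subgroup $K \cong \mathbb{Z}^2$ inside $W_{\Delta_{ij}} \cong D_\infty \times D_\infty$).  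Injectivity of the label map follows from Theorem~\ref{thm:completions}(\ref{thm:completions_pi1}); its torsion-freeness hypothesis is verified via Theorem~\ref{thm:completions}(\ref{thm:completions_torsion}) by checking that $\overline c$ admits no loop labeled by a reduced word whose letters form a clique of $\Delta$.  Finally, to embed $K$ into $\overline H$, choose a path $p$ in $\overline \Omega$ from its base vertex $B$ to $v$, and let $w$ denote the element of $W_\Delta$ represented by the label of $p$.  For any loop $\gamma$ at $v$ in $\overline c$, the concatenation $p \gamma p^{-1}$ is a loop at $B$, whose label therefore represents an element of $\overline H$ by Theorem~\ref{thm:completions}(\ref{thm:completions_loops}).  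Thus $w K w^{-1} \le \overline H$ is the desired closed surface subgroup.  The main obstacle I anticipate is identifying $\pi_1(\overline c)$ with the specific surface subgroup $K$ from Lemma~\ref{lem:surface_subgroups}, especially verifying the injectivity and torsion-freeness required to apply Theorem~\ref{thm:completions}(\ref{thm:completions_pi1}).
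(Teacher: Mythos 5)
Your proof follows essentially the same route as the paper's — locate a square $c$ in a completion $\Omega$ of $H$, pass to the generalized square $\overline c$ in $\overline\Omega$, use Lemma~\ref{lem:surface_subgroups} to recognize a surface subgroup $K$, and conjugate $K$ into $\overline H$ by the label of a path from the base vertex via Theorem~\ref{thm:completions}(\ref{thm:completions_loops}). However, you introduce machinery that the paper avoids: you work with $\pi_1(\overline c)$, compute vertex links to show $\overline c$ is a closed $2$-manifold, and then worry about injectivity and torsion-freeness in order to identify $\pi_1(\overline c)$ with $K$. None of this is needed. The paper simply defines $K$ to be the subgroup of $W_{\Delta_{ij}} \le W_\Delta$ generated by the labels of the simple loops at a vertex $v$ of $\overline c$, and Lemma~\ref{lem:surface_subgroups} is precisely the group-theoretic statement that this $K$ is a surface group — the lemma speaks directly about a subgroup of $W_C$ defined by words, and never mentions any $\pi_1$. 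So the ``main obstacle'' you anticipate (injectivity, torsion-freeness) is an artifact of your detour through $\pi_1(\overline c)$; taking the paper's path through labels bypasses it entirely. On the other hand, you correctly flag a point the paper glosses over: Lemma~\ref{lem:surface_subgroups} requires $\Delta_{ij}$ to have at least $6$ vertices ($k\ge 3$), while a $\Gamma$-partite graph with cycle connectors is permitted to have $\Delta_{ij}$ a $4$-cycle, in which case the lemma as stated does not apply and $K \cong \mathbb Z^2$ (still a closed surface group, namely a torus, so the theorem's conclusion survives, but the paper's invocation of ``closed \emph{hyperbolic} surface subgroup by Lemma~\ref{lem:surface_subgroups}'' is slightly imprecise there). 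Your explicit handling of this edge case is a genuine improvement, even if relegated to ``an analogous direct argument.''
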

\begin{proof}
	Let $\Omega$ be a completion of $H$. The generalization $\overline \Omega$ is a completion of $\overline H$ by Lemma~\ref{lem:completion}.
	As $H$ is not quasiconvex in $W_\Gamma$, Theorem~\ref{thm:completions} implies that 
	$\Omega$ is infinite.
	In particular, some cube attachment operation must have been performed to construct $\Omega$, and it follows that $\Omega$ contains a square $c$
	with label $s_is_js_is_j$, where $s_i$ and $s_j$ are adjacent vertices of $\Gamma$.
	We see a corresponding generalized square $\overline c$ (whose image under the collapsing map  is $c$) in $\overline \Omega$ with label $A_iA_jA_iA_j$. 
	
	Let $v$ be a vertex of $\overline \Omega$ that is incident to both a generalized edge $\overline e$ labeled by $A_i$ and a generalized edge $\overline f$ labeled by $A_j$.
	Note that $\Delta_{ij}$
	is a cycle by hypothesis, and the subgroup generated by $A_i \sqcup A_j$ (the set of vertices of $\Delta_{ij}$) is isomorphic to 
	$W_{\Delta_{ij}}$ (see \cite[Theorem 4.1.6]{Davis}).
	Let $K$ be the subgroup 
	of $W_{\Delta_{ij}}$ (hence of $W_\Delta$)
	generated by the labels of all simple loops based at $v$ and contained in $\overline e \cup \overline f$.
	Then $K$ is a closed hyperbolic surface subgroup by Lemma~\ref{lem:surface_subgroups}. 
	Let $w$ be the label of a path in $\overline \Omega$ from the base vertex to 
	$v$. 
	By Theorem~\ref{thm:completions}(\ref{thm:completions_loops}), $\overline H$ contains  $wKw^{-1}$.
\end{proof}

\begin{thm} \label{thm:not_fp}
	Let $\Gamma$ be a $4$-cycle, and let $\Delta$ be a $\Gamma$-partite graph with cycle connectors, no $3$-cycles and no $4$-cycles. Then $W_\Delta$ is $2$-dimensional, hyperbolic and contains a finitely generated, non-finitely presentable (and in particular, non-quasiconvex) subgroup. 	
\end{thm}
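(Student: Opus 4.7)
The plan is to first verify that $W_\Delta$ is $2$-dimensional and hyperbolic via Theorem~\ref{thm:functor}, since $\Delta$ has no $3$-cycles and no induced $4$-cycles. For the subgroup, I would take $H=\langle s_1s_2s_3s_4\rangle < W_\Gamma$, the non-quasiconvex cyclic subgroup from Example~\ref{ex:bipartite}, whose completion $\Omega$ is the bi-infinite cylinder $\mathbb{R}\times S^1$ tiled by squares, as in the proof of Theorem~\ref{thm:free_subgroup}. Theorem~\ref{thm:functor} then gives that the generalization $\overline H < W_\Delta$ is finitely generated, torsion-free, and non-quasiconvex, so the remaining task is to show that $\overline H$ is not finitely presentable.

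By Lemma~\ref{lem:completion} and Theorem~\ref{thm:completions}(\ref{thm:completions_pi1}), $\overline H \cong \pi_1(\overline\Omega)$, with $\overline\Omega$ a non-positively curved $K(\overline H, 1)$. The no-$3$- and no-$4$-cycle hypotheses on $\Delta$, combined with the fact that each $\Delta_{ij}$ is bipartite (so its cycle lengths are even), force $|A_i|+|A_j|\ge 6$ for every edge $(s_i,s_j)$ of $\Gamma$. A link-at-each-corner computation in the spirit of the proof of Theorem~\ref{thm:surface_subgroups} then shows that for every square $c$ of $\Omega$ labeled $s_is_js_is_j$, the corresponding generalized square of $\overline\Omega$ is, as a stand-alone subcomplex, a closed $2$-manifold of Euler characteristic $4-(|A_i|+|A_j|)\le -2$, that is, a closed hyperbolic surface. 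Thus $\overline\Omega$ is an infinite, doubly-periodic chain of closed hyperbolic surfaces, glued to one another along their generalized edges in the cylindrical pattern inherited from $\Omega$.

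The main obstacle is to convert this structural description into a rigorous non-finite-presentability statement. My plan is to lift the fundamental class of each generalized square to a $2$-cycle in the universal cover $\widetilde{\overline\Omega}$, and to argue that the resulting infinite family of classes spans a $\mathbb Z\overline H$-submodule of $H_2(\widetilde{\overline\Omega};\mathbb Z)$ which is not finitely generated. This would show that $\overline H$ fails to be of type $\mathrm{FP}_2$ and is therefore not finitely presentable. A backup approach is a Bestvina--Brady-type Morse theory argument using the height function induced by the $\mathbb{R}$-coordinate of the cylinder $\Omega$, realizing $\overline H$ as a kernel analogous to the classical Bieri--Stallings examples of finitely generated but non-finitely presentable subgroups of products of surface groups.
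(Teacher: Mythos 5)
Your setup is correct through the identification of $H=\langle s_1s_2s_3s_4\rangle$, the infinite cylinder completion $\Omega$, and the fact that $\overline H\cong\pi_1(\overline\Omega)$ with $\overline\Omega$ non-positively curved.  Your structural observation that the generalization of a single square, with cycle connectors, is a closed surface of Euler characteristic $4-(|A_i|+|A_j|)\le -2$ is also essentially right and in the spirit of Lemma~\ref{lem:surface_subgroups}.  However, the core of your plan for non-finite-presentability has a fatal flaw:  since $\overline\Omega$ is non-positively curved, its universal cover $\widetilde{\overline\Omega}$ is a $\mathrm{CAT}(0)$ cube complex and hence contractible, so $H_2(\widetilde{\overline\Omega};\mathbb Z)=0$.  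There are no nonzero $2$-cycles to lift and no submodule to analyze; the proposed $\mathbb Z\overline H$-module argument collapses.  More generally, the correct module criterion for failure of $\mathrm{FP}_2$ lives one degree lower --- one wants $H_1$ of the Cayley graph, or the relation module, to fail to be finitely generated --- and you have not given a route to that.  The Bestvina--Brady Morse-theory backup is mentioned only as a possibility, with no argument, and is not obviously applicable here since $\overline H$ is not presented as a kernel of a map to $\mathbb Z$.

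The paper takes a different and much shorter route that sidesteps any cohomological computation.  Let $K$ be the commutator subgroup of $W_\Delta$; since $W_\Delta$ is $2$-dimensional, $K$ has cohomological dimension at most $2$.  Set $\overline H' := \overline H\cap K$.  One shows that $\overline H'$ is a normal, infinite, infinite-index, non-free subgroup of $K$:  infinite and infinite-index follow from Theorem~\ref{thm:functor}(2),(4); non-freeness follows because $\overline H$ contains a closed surface group by Theorem~\ref{thm:surface_subgroups} (here the cycle connectors are essential);  and normality in $K$ is verified by an automorphism argument on the highly symmetric completion $\overline\Omega$ (the $1$-skeleton of $\Omega$ is a vertex-transitive-by-parts bipartite graph, and the label-preserving automorphisms lift to $\overline\Omega$).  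Bieri's theorem then gives that $\overline H'$ (hence $\overline H$, which contains it with finite index) is not finitely presentable.  If you want to salvage your approach, you would need to replace the $H_2$ computation with the correct relation-module criterion or, more practically, adopt the normality-plus-Bieri strategy;  the surface-subgroup observation you made is in fact exactly the ingredient that rules out freeness in the paper's argument.
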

\begin{proof}
	We immediately have that $W_\Delta$ is $2$-dimensional and hyperbolic as $\Delta$ has no $3$-cycles and no $4$-cycles.
	Let the vertices of $\Gamma$ be cyclically labeled $s_1, \dots, s_4$. Let $H$ be the subgroup of $W_\Gamma$ generated by the word $s_1s_2s_3s_4$. 

	Let $\overline H$ be the generalization of $H$. Let $K$ be the commutator subgroup of $W_\Delta$, and let $\overline H' := \overline H \cap K$. As $W_\Delta$ is $2$-dimensional, $K$ has cohomological dimension at most $2$. 
	We will show that $\overline H'$ is a normal, infinite-index, infinite, non-free subgroup of $K$. 
	It will then follow from a well-known theorem of Bieri \cite[Theorem~B]{Bieri-normal-subgroups} 
	 that $\overline H'$ is not finitely presentable.
	
	As $H$ is torsion-free and has infinite index in $W_\Gamma$,  we have by Theorem \ref{thm:functor} that $\overline H$ is torsion-free and has infinite index in $W_\Delta$. In particular, $\overline H'$ is infinite and has infinite index in $K$. 
	By Theorem~\ref{thm:surface_subgroups}, $\overline H$ contains a closed surface subgroup $S$. As $K$ has finite index in $W_\Delta$, it follows that $\overline H' = \overline H \cap K$ contains a 
	finite-index subgroup of $S$. As any finite-index subgroup of $S$ is also isomorphic to a closed surface group, it follows that $\overline H'$ is not free. 
	It remains to show that $\overline H'$ is normal in $K$.
	
	Let $\Omega$ be the completion of $H$ shown in Figure~2 of \cite{DL}.
	 The generalization $\overline \Omega$ of $\Omega$ is a completion of $\overline H$ by Lemma~\ref{lem:completion}. 	
	The $1$-skeleton of the complex $\Omega$ is isomorphic to an infinite bipartite graph.
	 Moreover, it is readily seen that given two vertices $u$ and $v$ (and corresponding vertices $\overline u$ and $\overline v$ in $\overline \Omega$) in a common 
		part of the bipartition of this graph,
	  there is a label preserving automorphism of $\Omega$ (resp. $\overline \Omega$) sending $u$ to $v$ (resp. $\overline u$ to $\overline v$).

	To see that $\overline H'$ is normal in $K$, let $k$ be a reduced word representing an element of $K$ and $h$ be a reduced word representing an element of $\overline H'$. It is enough to show that $khk^{-1} \in \overline H$. 
	Let $\gamma$ be a path in $\overline \Omega$ labeled by $khk^{-1}$ and based at the base vertex
$b$.
	 Such a path exists, since
	for every $s \in V(\Delta)$ and for every vertex $v$ of $\overline \Omega$, there is an edge labeled by $s$ incident to $v$. 
	Let $u$ be the endpoint of the initial subpath subpath of $\gamma$ labeled by $k$. As $k$ has even length, $b$ and $u$ are in the same part of the bipartition of $\overline \Omega$ and there is an automorphism of $\overline \Omega$ taking $b$ to $u$. In particular, as there is a loop in $\overline \Omega$ based at $b$ with label $h$ (by Theorem~\ref{thm:completions}(\ref{thm:completions_loops})), there is also a loop based at $v$ with label $h$. From this and as $\overline \Omega$ is folded, we conclude that $\gamma$ is a loop.
	Thus, again by Theorem~\ref{thm:completions}(\ref{thm:completions_loops}), $khk^{-1}$ represents an element of $\overline H$.
	We have thus shown that $\overline H'$ is normal in $K$, concluding our proof.
\end{proof}

\bibliographystyle{amsalpha}
\bibliography{refs}

\end{document}